\newcommand{\s}{\vspace{0.1cm}}
\newtheorem{theo}{Theorem}
\newtheorem{coro}{Corollary}
\newtheorem{prop}{Proposition}
\newtheorem{lemm}{Lemma}
\theoremstyle{remark}
\newtheorem{rema}{\bf Remark}
\begin{document}

\title{Geometric description of Virtual Schottky groups}
\author{Ruben A. Hidalgo}

\subjclass[2010]{30F10, 30F40, 20H10}
\keywords{Riemann surface, Schottky group, uniformization}

\address{Departamento de Matem\'atica y Estad\'{\i}stica, Universidad de La Frontera. Temuco, Chile}
\email{ruben.hidalgo@ufrontera.cl}
\thanks{Partially supported by Project Fondecyt 1190001}

\begin{abstract}
A virtual Schottky group is a Kleinian group $K$ containing a Schottky group $G$ as a finite index normal subgroup.
These groups correspond to those groups of automorphisms of closed Riemann surfaces which can be realized at the level of their lowest uniformizations.  In this paper we provide a geometrical structural decomposition of $K$. When $K/G$ is an abelian group, an explicit free product decomposition in terms of Klein-Maskit's combination theorems is provided.
\end{abstract}

\maketitle

\section{Introduction} 
Let $S$ be a closed Riemann surface of genus $g \geq 2$ and let ${\rm Aut}(S)$ (respectively, ${\rm Aut}^{+}(S)$) be its group of all conformal and anticonformal (respectively, conformal) automorphisms. In general ${\rm Aut}^{+}(S)={\rm Aut}(S)$ (for the generic situation this is the trivial group); otherwise, ${\rm Aut}^{+}(S)$ has index two in ${\rm Aut}(S)$. In \cite{Schwarz}, Schwarz  observed that ${\rm Aut}^{+}(S)$ is finite and, in \cite{Hurwitz}, Hurwitz noted that $|{\rm Aut}^{+}(S)| \leq 84(g-1)$. 

The surface $S$ can be uniformized by Kleinian groups. More precisely, there are Kleinian groups $K$ admitting an invariant connected component $\Delta$ of its region of discontinuity ($K$ acting freely on $\Delta$) such that $S$ is biholomorphic to the quotient Riemann surface $\Delta/K$. In this setting, one may wonder for the realization of subgroups of ${\rm Aut}(S)$ by them. These uniformizations of $S$ are partially ordered, the highest ones given by the Fuchsian uniformizations and the lowest ones being the Schottky uniformizations. 

Klein-Koebe-Poincar\'e's (Fuchsian) uniformization theorem \cite{Koebe2, Koebe3, Poincare} asserts the existence of a co-compact Fuchsian group $\Gamma \cong \pi_{1}(S)$, acting on the hyperbolic plane ${\mathbb H}^{2}$, and of a holomorphic regular covering map $Q:{\mathbb H}^{2} \to S$ whose deck group is $\Gamma$. In this case, ${\rm Aut}(S)$ lifts under $Q$ to obtain a NEC (non-Euclidean crystallographic)  group $N<{\rm Aut}({\mathbb H}^{2})$, containing $\Gamma$ as a normal subgroup, such that  ${\rm Aut}(S) \cong N/\Gamma$,  ${\rm Aut}^{+}(S) \cong N^{+}/\Gamma$, where $N^{+}$ is the subgroup of $N$ consisting of its conformal elements. The structure of NEC groups is well known \cite{Wilkie}.

Koebe's retrosection theorem \cite{Koebe1} asserts the existence of a Schottky group $G$ (a purely loxodromic Kleinian group isomorphic to a free group, of rank $g$, with non-empty region of discontinuity $\Omega$) and of a holomorphic covering map $P:\Omega \to S$ whose deck group is $G$. As $\Omega$ is not 
simply-connected, it may happen that some automorphisms of $S$ do not lift under $P$ to automorphisms of $\Omega$. In fact, if $H<{\rm Aut}^{+}(S)$ does lifts, then 
$|H|\leq 12(g-1)$ (for a $3$-dimensional combinatorial argument see \cite {Zimmermann2, Zimmermann3} and, for a Kleinian groups argument, see \cite{Hidalgo:cota}). This, in particular, asserts that if $H$ lifts with respect to $P$, then $S/H$ cannot be an orbifold of genus zero with exactly three cone points (this fact can also be obtained as a consequence of the results in \cite{Kra}). 
Now, if $H$ lifts, then we obtain an (extended) Kleinian group $K$, containing the Schottky group $G$ as a normal subgroup, and with $H \cong K/G$; we say that $K$ is an {\it (extended) virtual Schottky group}. (These groups are the correspondent to NEC groups appearing at the level of the Fuchsian uniformizations.)

A simple geometrical necessary and sufficient condition for the lifting of $H<{\rm Aut}(S)$ under a regular covering $P:\Omega \to S$, whose deck group is a Schottky group, is given by the equivariant loop theorem, i.e., the existence of a certain $H$-invariant collection ${\mathcal F} \subset S$ of pairwise disjoint simple loops such that $S \setminus {\mathcal F}$ consists of planar regions (see Theorem \ref{teo:lift}). This fact was proved in \cite{Y-M1, Y-M2} by using the theory of minimal surfaces. In \cite{H-M} there is a simple argument in the setting of Kleinian groups. 

Maskit's structural description of (extended) function groups \cite{Maskit:function1, Maskit:function, H:extendedfunction} permits to obtain a general structural description, in terms of Klein-Maskit's combination theorems \cite{Maskit:Comb, Maskit:Comb4}, of an (extended) virtual Schottky group $K$  (see Propertiess \ref{corovirtual} and \ref{principal}).  If $G$ is a Schottky group, which is a finite index normal subgroup of $K$, and we know the algebraic structure of $H=K/G$, then we should expect such an structural decomposition to be more explicit in a geometrical sense. In \cite{Hidalgo:Schottky}, this was done for $H<{\rm Aut}^{+}(S)$ a cyclic group (we recall it in Section \ref{Sec:casociclico}). In this paper, we provide such an explicit description when $H<{\rm Aut}^{+}(S)$ is an abelian group (Theorem \ref{Abel}).  Its proof, given in Section \ref{Sec:prueba}, is summarized as follows. We start with a virtual Schottky group $K$ and a Schottky group $G$, being normal subgroup of $K$ and such that $H=K/G$ is an abelian group. We fix a regular holomorphic covering $P:\Omega \to S=\Omega/G$. By Theorem \ref{teo:lift}, there is a family of loops ${\mathcal F} \subset S$ such that (i) $S \setminus {\mathcal F}$ is a collection of planar surfaces, (ii) ${\mathcal F}$ is $H$-invariant and (iii) ${\mathcal F}$ defines the covering $P$. We lift, under $P$, such a collection of loops to obtain a collection of loops $\widehat{\mathcal F} \subset \Omega$ (called structural loops and the components of $\Omega \setminus \widehat{\mathcal F}$ called structural regions). As ${\mathcal F}$ is $H$-invariant, it follows that $\widehat{\mathcal F}$ is $K$-invariant. Each structural region has $K$-stabilizer being either trivial, finite cyclic or isomorphic to ${\mathbb Z}_{2}^{2}$. We observe that each of these stabilizers can be enlarged to one of the so called basic virtual Schottky groups (these are described in Section \ref{Sec:basicos}), which happens to be either the same groups or some HNN-extension by some loxodromic elements. Then, we proceed 
to glue a finite number of these structure regions, along some of these structural loops (with trivial $K$-stabilzers), in order to obtain a larger connected set $\widetilde{R}$ (with a finite number of structural boundary loops).  We apply a free product, in the sense of Klein-Maskit's combination theorem, of the involved basic virtual Schottky groups to obtain a Kleinian group $K^{*}<K$. Next, we consider the boundary structural loops of $\widetilde{R}$, with trivial $K$-stabilizers, and we observe that they are paired by some loxodromic elements of $K$. We produce some HNN-extensions of $K^{*}$, in the sense of  Klein-Maskit's combination theorem, by these loxodromic elements to obtain $K$.

From a $3$-dimensional point of view, Schottky groups are  exactly those producing a geometrically finite complete hyperbolic structures on the interior of a handledody with injectivity radius bounded away from zero. In this way, a geometrical structure description of (extended) virtual Schottky groups also provides a description of finite group actions on handlebodies \cite{Zimmermann2, Zimmermann3} from a point of view of Kleinian groups.

\section{Preliminaries}\label{Sec:preliminares}
In this section we recall some generalities on (extended) Kleinian groups, we state Klein-Maskit's combination theorem and  Maskits' structural description of (extended) function groups. The basic details may be found, for instance, in the books \cite{Maskit:book,MT}. 

\subsection{(Extended) Kleinian groups}
 The group of conformal automorphisms of the Riemann sphere $\widehat{\mathbb C}$ is the group of M\"obius transformations ${\mathbb M} \cong {\rm PSL}_{2}({\mathbb C})$ and its group of conformal and anticonformal automorphisms 
is $\widehat{\mathbb M}=\langle {\mathbb M}, J(z)=\overline{z}\rangle$ (the  elements of $\widehat{\mathbb M}\setminus{\mathbb M}$ are called {\it extended M\"obius transformations}). If $K \leq \widehat{M}$, then we set $K^{+}:=K \cap {\mathbb M}$.    

A discrete subgroup $K$ of ${\mathbb M}$ (respectively, of $\widehat{\mathbb M}$ and containing extended M\"obius transformations) is called a Kleinian group (respectively, an extended Kleinian group).  In this case, the {\it region of discontinuity} of $K$ is the (which it might be empty) open set $\Omega \subset \widehat{\mathbb C}$ consisting of all those points $p \in \widehat{\mathbb C}$ on which $K$ acts discontinuously (i.e., (i) its $K$-stabilizer $K(p)$ is finite and (ii) there is an open set $U$, containing $p$, such that $A(U) \cap U = \emptyset$ if $T \in K \setminus K(p)$). The complement $\Lambda:=\widehat{\mathbb C} \setminus \Omega$ is called its {\it limit set}.

\subsection{Schottky groups}
The Schottky group of rank zero is just the trivial group.
A {\it Schottky group of rank $g \geq 1$} is a group $G$ generated by $g$ loxodromic elements $A_{1},\ldots, A_{g}$ such that: (i) there exists a collection of
$2g$ pairwise disjoint simple loops $C_{1},\dots,C_{g}$, $C'_{1}, \ldots,C'_{g}$ on the the Riemann sphere $\widehat{\mathbb C}$,
bounding a common region $\mathcal D$ of connectivity $2g$,  (ii) $A_{j}(C_{j})=C'_{j}$ and (iii) $A_{j}({\mathcal D}) \cap {\mathcal D} = \emptyset$, for all $j=1,\ldots,g$.  
The set of transformations $A_{1},\ldots,A_{g}$ as above is called a {\it Schottky set of generators} for $G$.

It is known that a Schottky group of rank $g$ is a purely loxodromic Kleinian group, isomorphic to a free group of rank $g$, without empty region of discontinuity (the converse holds \cite{Maskit2}). In \cite{Chuckrow},  Chuckrow proved that every set of $g$ generators of a Schottky group $G$ of rank $g$ is in fact a Schottky set of generators. Its region of discontinuity $\Omega$ is connected and the quotient space $\Omega/G$ is a closed Riemann surface of genus $g$. 

Koebe's retrosection theorem \cite{Koebe1} states that every closed Riemann surface is biholomorphically equivalent to $\Omega/G$ for a suitable Schottky group $G$. A simple proof of this fact, using quasiconformal mappings theory, was provided by Bers \cite{Bers}.

\subsection{(Extended) virtual Schottky groups}
An (extended) Kleinian group is called an {\it (extended) virtual Schottky group} if it contains a Schottky group as a finite index subgroup. The finite index condition permits to assume the Schottky group to be a finite index normal subgroup. In the other direction, if an (extended) Kleinian group contains a Schottky group of positive rank as a normal subgroup, then it must have finite index.

\subsection{Schottky uniformizations of closed Riemann surfaces}
A {\it Schottky uniformization} of a closed Riemann surface $S$ is a triple $(\Omega,G,P)$, where $G$ is a Schottky group (necessarily of rank equal to the genus of $S$) with region of discontinuity $\Omega$, and $P:\Omega \to S$ is a regular covering with $G$ as its deck group.

\begin{theo}[\cite{Maskit1}]\label{Thm:Uniform}
Let $S$ be a closed Riemann surface of genus $g$.

\noindent
{\rm (1)} If $(\Omega,G,P)$ is a Schottky uniformization of $S$, then there exists a collection $\{\alpha_{m}\}$ of (homotopically independent) pairwise disjoint simple loops on $S$, with $S\setminus \{\alpha_{m}\}$ a collection of planar surfaces, such that 
$P:\Omega \to S$ is a regular covering for which:
{\rm (1.1)} each of the loops $\alpha_{m}$ lifts to loops, and
{\rm (1.2)} every loop in $\Omega$ is freely homotopic to a product of such lifted loops.

\noindent
{\rm (2)} Given any collection $\{\alpha_{m}\}$ of (homotopically independent) pairwise disjoint simple loops on $S$, with $S\setminus \{\alpha_{m}\}$ a collection of planar surfaces, there is a Schottky uniformization $(\Omega,G,P)$ of $S$ such that $P:\Delta \to S$ satisfies {\rm (1.1)} and {\rm (1.2)} above.
\end{theo}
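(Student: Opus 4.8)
The plan is to prove (1) by unwinding the structure of a Schottky fundamental domain, and (2) by reconstructing $G$ from the complementary pieces of $\{\alpha_{m}\}$ through the Klein--Maskit combination theorems.

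\emph{Part (1).} Fix a Schottky set of generators $A_{1},\dots,A_{g}$ of $G$ with defining loops $C_{1},\dots,C_{g},C_{1}',\dots,C_{g}'$, chosen inside $\Omega$, bounding the fundamental domain $\mathcal{D}$ of connectivity $2g$, and set $\alpha_{j}:=P(C_{j})=P(C_{j}')$. Since the $G$-translates of $\mathcal{D}$ tile $\Omega$ and meet only along $G$-translates of the $C_{j}$, the $\alpha_{j}$ are simple and pairwise disjoint, $S\setminus\{\alpha_{j}\}=P(\mathcal{D})$ is biholomorphic to $\mathcal{D}$, hence connected and planar, and the connectedness of this complement forces the classes $[\alpha_{j}]$ to be homotopically independent. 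Property (1.1) is immediate, as $C_{j}$ is a loop lift of $\alpha_{j}$. For (1.2) I would pass to fundamental groups: $P$ presents $\Omega$ as the covering of $S$ associated to $N:=P_{*}\pi_{1}(\Omega)=\ker(\pi_{1}(S)\to G)$. Each $[\alpha_{j}]$ lies in the normal subgroup $N$, so $\langle\langle[\alpha_{1}],\dots,[\alpha_{g}]\rangle\rangle\subseteq N$; conversely, collapsing each $\alpha_{j}$ to a point exhibits $S$ as a $2$-sphere with $g$ pairs of points identified, so $\pi_{1}(S)/\langle\langle[\alpha_{j}]\rangle\rangle$ is free of rank $g$, and, a surjection between free groups of equal finite rank being an isomorphism, $N=\langle\langle[\alpha_{j}]\rangle\rangle$. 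As a normal closure, $N$ is generated by the $\pi_{1}(S)$-conjugates of the $[\alpha_{j}]$, which are represented precisely by the loops $T(C_{j})$, $T\in G$, i.e.\ by the lifts of the $\alpha_{j}$; hence every loop in $\Omega$ is freely homotopic to a product of lifted loops.

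\emph{Part (2).} Given a collection $\{\alpha_{m}\}$ of $r$ such loops, I would build $(\Omega,G,P)$ directly with the Klein--Maskit combination theorems \cite{Maskit:Comb, Maskit:Comb4}. Choosing the $\alpha_{m}$ smooth and cutting $S$ along them yields finitely many compact bordered Riemann surfaces $\widehat{P}_{1},\dots,\widehat{P}_{k}$ of genus $0$; by Koebe's theorem on circle domains each $\widehat{P}_{i}$ is conformally a domain in $\widehat{\mathbb{C}}$ bounded by disjoint round circles, and the $\alpha_{m}$ prescribe a pairing of the full list of these $2r$ circles into $r$ pairs. For each pair choose a loxodromic M\"obius transformation effecting the corresponding identification. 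Since the gluing pattern on the pieces is connected (because $S$ is), one can re-embed the $\widehat{P}_{i}$ in $\widehat{\mathbb{C}}$, in general position, so that $k-1$ of the pairings form a spanning tree; applying the first combination theorem (amalgamation over the trivial group, i.e.\ free product) along these assembles a single connected configuration, and the second combination theorem (HNN extension with loxodromic stable letter) applied to the remaining $r-k+1$ pairings produces a discrete, free, purely loxodromic Kleinian group $G$ with connected region of discontinuity $\Omega$ and $\Omega/G\cong S$. An Euler characteristic count gives $r-k+1=g$, so $G$ has rank $g$, and by the converse characterization of Schottky groups \cite{Maskit2} it is a Schottky group of rank $g$; with $P:\Omega\to S$ the quotient map, $(\Omega,G,P)$ is a Schottky uniformization of $S$.

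\emph{Verification, and the main obstacle.} Property (1.1) holds because the boundary circles of the $\widehat{P}_{i}$ are loops of $\Omega$ mapping homeomorphically onto the $\alpha_{m}$; and (1.2) follows exactly as in Part (1), since by construction $\langle\langle[\alpha_{m}]\rangle\rangle\subseteq P_{*}\pi_{1}(\Omega)$ while $\pi_{1}(S)/\langle\langle[\alpha_{m}]\rangle\rangle$ and $\pi_{1}(S)/P_{*}\pi_{1}(\Omega)\cong G$ are both free of rank $g$, forcing equality, whence $\pi_{1}(\Omega)$ is generated by $\pi_{1}(S)$-conjugates of the $[\alpha_{m}]$, i.e.\ by lifts of the $\alpha_{m}$. (Intrinsically, one could instead take the covering $\widetilde{S}\to S$ defined by $\langle\langle[\alpha_{m}]\rangle\rangle$ with the lifted complex structure and check, via Maskit's planarity theorem, that $\widetilde{S}$ is planar with Schottky deck group.) I expect the main obstacle to be the combinatorial and analytic bookkeeping for the combination theorems: the planar pieces must be re-embedded in $\widehat{\mathbb{C}}$ so that the hypotheses of the first and second combination theorems (a separating Jordan curve; disjoint isometric discs for the loxodromic gluing maps) hold simultaneously along the entire gluing pattern, and the rank count must be matched with the genus --- which is precisely where the planarity of $S\setminus\{\alpha_{m}\}$ is used, homotopic independence serving as a non-degeneracy normalization of the loop system.
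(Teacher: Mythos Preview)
The paper does not prove Theorem~\ref{Thm:Uniform}; it is quoted as a result of Maskit \cite{Maskit1} and used as input for the rest of the paper. So there is no ``paper's own proof'' to compare against, and your proposal is really an independent attempt at Maskit's theorem.

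Your Part~(1) is clean and correct: reading off the $\alpha_{j}$ from a Schottky fundamental domain and then identifying $P_{*}\pi_{1}(\Omega)$ with the normal closure $\langle\langle[\alpha_{1}],\dots,[\alpha_{g}]\rangle\rangle$ via the Hopfian property of free groups is exactly the right mechanism, and it yields (1.1) and (1.2) immediately.

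In Part~(2), however, your primary construction has a genuine gap at the conformal level. You uniformize each planar piece $\widehat{P}_{i}$ as a circle domain and then ``choose a loxodromic M\"obius transformation effecting the corresponding identification'' across each matched pair of boundary circles. The identification dictated by $S$ is a specific real-analytic circle diffeomorphism $\phi_{j}\circ\phi_{i}^{-1}$ coming from the two boundary uniformizers; a M\"obius map restricted to a circle is a $3$-parameter family, so in general there is \emph{no} M\"obius transformation realizing that prescribed boundary gluing. Consequently your combination-theorem assembly produces a Schottky group $G$ with $\Omega/G$ homeomorphic to $S$ and with the correct defining loops, but not, a priori, biholomorphic to $S$. (One can repair this with a quasiconformal deformation/measurable Riemann mapping argument to push the conformal structure of $\Omega/G$ onto that of $S$ while keeping the group Schottky, but that step is missing.) Your rank count $r-k+1=g$ and the spanning-tree bookkeeping are fine; the obstacle is not the combinatorics you flagged, but the analytic one of matching the conformal gluing data with M\"obius maps.

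The parenthetical alternative you mention at the end --- take the covering of $S$ determined by $\langle\langle[\alpha_{m}]\rangle\rangle$, show it is planar, and identify the deck group as a Schottky group --- is the approach that actually works without this defect, and it is essentially Maskit's original argument in \cite{Maskit1}. If you promote that line to the main argument (and supply the planarity verification), Part~(2) goes through.
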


The collection of loops $\{\alpha_{m}\}$, as in the above theorem, is called a {\it defining set of loops} for the Schottky uniformization $(\Omega,G,P)$.

\subsection{Lifting automorphisms to Schottky uniformizations}
Let $(\Omega,G,P)$ be a Schottky uniformization of a closed Riemann surface $S$.  
We say that a group $H<{\rm Aut}(S)$ {\it lifts with respect to} $(\Omega,G,P)$ if, for every $h \in H$, there is some $\widehat{h} \in {\rm Aut}(\Omega)$ such that $P \circ \widehat{h}=h \circ P$. 

In such a case, by lifting all the elements of $H$ provides of a discrete group $K<{\rm Aut}(\Omega)$ containing $G$ as a normal subgroup such that $H=K/G$, and a short exact sequence $1 \to G \to K \to H \to 1$. 
As the region of discontinuity of a Schottky group is of class $O_{AD}$; that is, it admits no holomorphic function with finite Dirichlet norm (see \cite[pg 241]{A-S}), it follows from this (see \cite[pg 200]{A-S}) that every conformal map from $\Omega$ into the Riemann sphere is a M\"obius transformation. This in particular asserts that the conformal (respectively, anticonformal) automorphisms of $\Omega$ are restrictions of M\"obius (respectively, extended M\"obius) transformations. In this way, $K$ is an (extended) virtual Schottky group. 

\subsection{Equivariant loop theorem}
Necessary and sufficient conditions, for a subgroup $H<{\rm Aut}(S)$ to lift with respect to a Schottky uniformization of $S$, is provided by Meeks-Yau's equivariant loop theorem \cite{Y-M1,Y-M2}, whose proof is based on minimal surfaces theory. In \cite{H-M} there is provided a proof which only uses techniques of Kleinian groups (in the same paper, a general equivariant loop theorem was stated for Kleinian groups).

\begin{theo}[Equivariant loop theorem for handlebodies]\label{teo:lift}
Let $(\Omega,G,P)$ be a Schottky uniformization of a closed Riemann surface $S$ and let $H<{\rm Aut}(S)$ be a (finite) group.
Then $H$ lifts with respect to $(\Omega,G,P)$ if and only if there is a collection of defining  loops ${\mathcal F}=\{\alpha_{m}\}$ of the uniformization, which is invariant under $H$, that is, for every $h \in H$ and every $m$, there exists $m'$ with $h(\alpha_{m})=\alpha_{m'}$. 
\end{theo}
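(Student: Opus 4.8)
The plan is to prove the two implications by quite different means: the direction $(\Leftarrow)$ is soft covering-space theory, while the direction $(\Rightarrow)$ rests on an equivariant loop theorem in dimension three, which is the one substantial ingredient.

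For $(\Leftarrow)$, suppose we are handed an $H$-invariant defining set $\mathcal F=\{\alpha_m\}$ for $(\Omega,G,P)$. I would fix a basepoint, put $N=P_{*}\pi_{1}(\Omega)\triangleleft\pi_{1}(S)$, so that $\pi_{1}(S)/N\cong G$, and read off from conditions (1.1) and (1.2) of Theorem~\ref{Thm:Uniform} that each class $[\alpha_m]$ lies in $N$ and that $N$ is the normal closure in $\pi_{1}(S)$ of the family $\{[\alpha_m]\}$. Since each $h\in H$ permutes the loops $\alpha_m$, the induced map $h_{*}$ (well defined on $\pi_{1}(S)$ only up to an inner automorphism, hence acting genuinely on normal subgroups) permutes the conjugacy classes of the $[\alpha_m]$ and therefore fixes their normal closure: $h_{*}(N)=N$. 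By the usual lifting criterion for regular coverings this produces a homeomorphism $\widehat h\colon\Omega\to\Omega$ with $P\circ\widehat h=h\circ P$; as $h$ is conformal or anticonformal, so is $\widehat h$, and by the $O_{AD}$-rigidity of $\Omega$ recalled above it is the restriction of a (possibly extended) M\"obius transformation. Hence $H$ lifts.

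For $(\Rightarrow)$, suppose $H$ lifts, so that one obtains $K<{\rm Aut}(\Omega)$ with $G\triangleleft K$ of finite index and $K/G\cong H$; then $K$ and $G$ have the same limit set, so $\Omega$ is $K$-invariant. I would move to three dimensions: extending $K$ to $\mathbb H^{3}$ by Poincar\'e extension, the quotient $M:=(\mathbb H^{3}\cup\Omega)/G$ is a handlebody with $\partial M=S$ on which $H=K/G$ acts (possibly with fixed points), extending the given action on $S$; moreover the inclusion $S\hookrightarrow M$ induces the surjection $\pi_{1}(S)\to\pi_{1}(M)=G$ defining $P$, of kernel $N$. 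The core step is then to invoke the equivariant loop theorem for $3$-manifolds (\cite{Y-M1,Y-M2}, or its Kleinian-group proof in \cite{H-M}): it furnishes an $H$-invariant family of disjoint, properly embedded compressing disks, and (iterating until none survives, and using that $M$ is a handlebody) an $H$-invariant disk system $\mathcal D$ cutting $M$ into balls. I would then set $\mathcal F=\partial\mathcal D\subset S$. This $\mathcal F$ is a system of disjoint simple loops, $H$-invariant because $\mathcal D$ is, with $S\setminus\mathcal F$ planar (cutting $M$ along $\mathcal D$ turns $S$ into a subsurface of a union of $2$-spheres); each $\alpha_m$ bounds a disk in $M$, so $[\alpha_m]\in\ker(\pi_{1}(S)\to\pi_{1}(M))=N$, which gives (1.1); and a van Kampen argument on the cut-open manifold identifies $N$ with the normal closure of $\{[\alpha_m]\}$, which is exactly (1.2). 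Thus $\mathcal F$ is the required $H$-invariant defining set.

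I expect the main obstacle to be the direction $(\Rightarrow)$, for two reasons. First, the equivariant loop theorem is itself a deep statement (Meeks--Yau's minimal-surface argument, respectively the Kleinian-group treatment of \cite{H-M}), and the whole implication is built on it. Second, once $M$ has been cut down to balls one must make sure that the resulting curve system is not merely $H$-invariant and planar-cutting, but a defining set for the \emph{given} uniformization $P$ --- that is, that it detects precisely the kernel $N$, with neither too few nor too many loops. The direction $(\Leftarrow)$, by contrast, is routine: the lifting criterion together with the $O_{AD}$-rigidity of the region of discontinuity of a Schottky group.
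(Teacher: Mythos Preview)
The paper does not actually contain a proof of this theorem: it is stated as a result imported from the literature, with attribution to Meeks--Yau \cite{Y-M1,Y-M2} for the minimal-surface proof and to \cite{H-M} for a Kleinian-groups proof. Your sketch is correct and follows precisely the strategy of those references: the $(\Leftarrow)$ direction via the lifting criterion (normal closure of the defining loops is $H_*$-invariant), and the $(\Rightarrow)$ direction by passing to the handlebody $(\mathbb H^{3}\cup\Omega)/G$, invoking the equivariant loop/Dehn's lemma to produce an $H$-invariant complete disk system, and reading off the defining loops as its boundary. Your self-diagnosis is also accurate: the only deep input is the equivariant loop theorem itself, and the verification that the resulting $\mathcal F$ defines the \emph{given} Schottky covering (not merely some Schottky covering) is exactly the point handled by the van Kampen argument you indicate.
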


A collection of loops ${\mathcal F}$, as in the previous theorem, is called a {\it Schottky system of loops of $H$ corresponding the Schottky uniformization $(\Omega,G,P)$}.

 We should note that such a Schottky system of loops of $H$ needs not to be unique, even if we require it to be {\it minimal} (that is, no non-trivial sub-collection still a Schottky system of loops of $H$).

\begin{rema}[A remark on the decomposition structure of $H$]\label{estructuradeH}
Let ${\mathcal F} \subset S$ be a collection of loops which is invariant under $H$ and $S\setminus {\mathcal F}$ consists of planar surfaces (i.e., a Schottky system of loops for $H$). Such a collection permits to describe an algebraic decomposition structure of $H$, as a finite iteration of amalgamated free products and HNN-extensions of certain subgroups of $H$, as follows. Let us consider a maximal collection of components of $S \setminus {\mathcal F}$, say $S_{1}$\ldots, $S_{n}$, so that any two different components are not $H$-equivalent. Let us denote by $H_{j}$ the $H$-stabilizer of $S_{j}$. It is possible to chose these surfaces so that, by adding some common boundary loops, we obtain a planar surface $S^{*}$ (containing each $S_{j}$ in its interior). If two surfaces $S_{i}$ and $S_{j}$ have a common boundary in $S^{*}$, then $H_{i} \cap H_{j}$ is either trivial or a cyclic group (this being exactly the $H$-stabilizer of the common boundary loop).  We perform the amalgamated free product of  $H_{i}$ and $H_{j}$ along the trivial or cyclic group $H_{i} \cap H_{j}$. Set $S_{ij}$ be the union of $S_{i}$, $S_{j}$ with the common boundary loop in $S^{*}$ and set $H_{ij}$ the constructed group. Now, if $S_{k}$ is another of the surfaces which has a common boundary loop in $S^{*}$ with $S_{ij}$, then we again perform the amalgamated free product of  $H_{ij}$ and $H_{k}$ along the trivial or cyclic group $H_{ij} \cap H_{k}$.  Continuing with this procedure, we end with a group $H^{*}$ obtained as amalgamated free product along trivial or finite cyclic groups.  If $\alpha$ is any of the boundary loops of $S^{*}$, there should be 
a (not necessarily different) boundary loop $\beta$ of $S^{*}$ and an element $h \in H\setminus\{I\}$ so that $h(\alpha)=\beta$. By the choice of the surfaces $S_{j}$, we must have that $h(S^{*}) \cap S^{*}=\emptyset$. In particular, if $\beta = \alpha$, then $h$ has order two with two fixed points on $\alpha$. Also, if there is another boundary loop $\gamma$ of $S^{*}$ (different from $\beta$) and an element $u \in H$ so that $u(\alpha)=\gamma$, then $hu^{-1} \in H \setminus \{I\}$ sends the region $S_{j}$ containing  $\gamma$ in its boundary to the region $S_{i}$ containing $\beta$ in its boundary, a contradiction to the choice of the regions $S_{k}$.
 We may now perform the HHN-extension of $H^{*}$ by the finite cyclic group generated by $h$. If $\alpha_{1}=\alpha$,\ldots, $\alpha_{m}$ are the boundary loops of $S^{*}$, which are not $H$-equivalent, then we perform the HHN-extension with each of them. At the end, we obtain an isomorphic copy of $H$.
\end{rema}

\subsection{Klein-Maskit's combination theorem}
 Let $K$ be a Kleinian group with non-empty region of discontinuity $\Omega$. Let $H$ be a subgroup of $K$ with limit set $\Lambda(H)$. 
A set $X \subset \widehat{\mathbb C}$ is called {\it precisely invariant under $H$ in $K$} if $U(X)=X$, for every $U \in H$, and $V(X) \cap X = \emptyset$, for every $V \in K\setminus H$. For our purposes, the group $H$ will be either the trivial group, or a finite cyclic group or an infinite cyclic group generated by a parabolic transformation.
If $H$ is a cyclic group, a {\it precisely invariant disc} $B$ is the interior of a closed topological
disc $\overline{B}$, where $\overline{B}-\Lambda(H) \subset \Omega$ is precisely invariant under $H$ in $K$.

\begin{theo}[Klein-Maskit's combination theorem \cite{Maskit:Comb, Maskit:Comb4}]\label{KMC}
\mbox{}

\noindent
(1) ({\bf Amalgamated free products}).
For $j=1,2$, let $K_{j}$ be a Kleinian group, let $H \leq K_{1} \cap K_{2}$ be a cyclic subgroup (either trivial, finite or generated by a parabolic transformation), $H \neq K_{j}$, and 
let $B_{j}$ be a precisely invariant disc under $H$ in $K_{j}$. Assume that $B_{1}$ and $B_{2}$ have as a common boundary the simple loop $\Sigma$ and that $B_{1} \cap B_{2}=\emptyset$.
Then $K=\langle K_{1},K_{2}\rangle$  is a Kleinian group isomorphic to the free product of $K_{1}$ and $K_{2}$ amalgamated over $H$, that is, $K=K_{1} *_{H} K_{2}$, and every elliptic or parabolic element of $K$ is conjugated in $K$ to an element of either $K_{1}$ or $K_{2}$. (In particular, if both $K_{j}$ have no parabolic elements, then neither does $K$.)
Moreover, if $K_{1}$ and $K_{2}$ are both geometrically finite, then $K$ is also geometrically finite. Also, if the limit sets of both $K_{1}$ and $K_{2}$ are totally disconnected, then the same holds for the limit set of $K$.

\noindent
(2) ({\bf HNN extensions}).
Let $K$ be a Kleinian group. For $j=1,2$, let $B_{j}$ be a precisely invariant disc under the cyclic subgroup $H_{j}$  (either trivial, finite or generated by a parabolic)  in $K$, let $\Sigma_{j}$ be the boundary loop of $B_{j}$ and assume that $T(\overline{B}_{1}) \cap \overline{B}_{2} =\emptyset$, for every $T \in K$. Let $A$ a loxodromic transformation such that $A(\Sigma_{1})=\Sigma_{2}$, $A(B_{1}) \cap B_{2}=\emptyset$, and $A^{-1} H_{2} A=H_{1}$. Then $K_{A}=\langle K, A\rangle$ is a Kleinian group, isomorphic to the HNN-extension $K*_{\langle A \rangle}$  (that is, every relation in $K_{A}$ is consequence of the realtions in $K$ and the relations $A^{-1}H_{2}A=H_{1}$). If each $H_{j}$, for $j=1,2$, is its own normalization in $K$, then every elliptic or parabolic element of $K_{A}$ is conjugated to some element of $K$. (In particular, if $K$ has no parabolic elements, then neither does $K_{A}$.) Moreover, if $K$ is geometrically finite, then $K_{A}$ is also geometrically finite. Also, if the limit set of $K$ is totally disconnected, then the same holds for the limit set of $K_{A}$.

\end{theo}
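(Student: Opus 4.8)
The plan is to derive both parts from the classical ping-pong (table-tennis) method combined with an explicit fundamental domain for the combined group, reading off discreteness, the description of the torsion and parabolic elements, geometric finiteness and total disconnectedness of the limit set from that domain. I would rely throughout on the standard facts that a subgroup of $\widehat{\mathbb M}$ admitting a fundamental domain with non-empty interior is discrete, and that both geometric finiteness and the (dis)connectedness of the limit set are controlled by the combinatorics of how the translates of the constituent fundamental domains are glued together.

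For part (1), I would begin with the decomposition $\widehat{\mathbb C}=B_{1}\sqcup\Sigma\sqcup B_{2}$ and rephrase precise invariance as a nesting property: since $\Lambda(H)\subset\Sigma$ has at most two points, the hypothesis gives $V(\overline{B_{1}})\subseteq\overline{B_{2}}$ for every $V\in K_{1}\setminus H$ and $W(\overline{B_{2}})\subseteq\overline{B_{1}}$ for every $W\in K_{2}\setminus H$, with each translated disc meeting $\Sigma$ in at most two points. First I would prove the algebraic statement $K\cong K_{1}*_{H}K_{2}$ by showing that the natural surjection $\varphi\colon K_{1}*_{H}K_{2}\to K=\langle K_{1},K_{2}\rangle$ is injective: given a non-trivial element written in normal form $h\,g_{1}\cdots g_{n}$ (with $h\in H$, the $g_{i}$ alternately in $K_{1}\setminus H$ and $K_{2}\setminus H$, and $n\ge 1$), I would feed through the word a base point chosen in the interior of a fundamental set for $H$ on $B_{1}$ or on $B_{2}$ according to the side of the last letter $g_{n}$; the nesting inclusions force the image onto the opposite side, unless $g_{1}$ lies on the same side as the base point, in which case I would instead observe that the image disc $\varphi(g)(\overline{B_{j}})$ meets $\Sigma$ in at most two points and so is properly contained in $\overline{B_{j}}$, whence $\varphi(g)\ne\mathrm{id}$. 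The same bookkeeping, applied to a cyclically reduced word of length $\ge 1$, shows it has an attracting fixed point in one of $\overline{B_{1}},\overline{B_{2}}$ and a repelling fixed point in the other, hence is loxodromic; therefore every elliptic or parabolic element of $K$ is conjugate in $K$ into some $K_{j}$. Next I would build the fundamental domain $D=D_{1}\cap D_{2}$, where $D_{j}$ is a fundamental domain of $K_{j}$ chosen so that $D_{j}\cap B_{j}$ is a circular sector, a strip, or all of $B_{j}$ according as $H$ is finite, generated by a parabolic, or trivial, and verify that $D$ is a fundamental domain for $K$: precise invariance of $D$ comes from the normal form, and the fact that $\overline{D}$ meets every $K$-orbit in $\Omega$ comes from a pull-back algorithm — given a point of $\Omega$, one successively applies elements of $K_{1}$ and $K_{2}$ to push it into $D_{1}$, then $D_{2}$, then $D_{1}$, and so on, the spherical diameters of the successive translated discs shrinking so that the process terminates after finitely many steps. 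Discreteness is then immediate, geometric finiteness follows because $D$ is built from the finite-sided $D_{j}$ by a single cut-and-paste along $\Sigma$, and total disconnectedness of $\Lambda(K)=\overline{\bigcup_{g\in K}g\big(\Lambda(K_{1})\cup\Lambda(K_{2})\big)}$ follows since the nested-discs picture forbids a continuum from arising.

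For part (2), I would carry out the HNN analogue. Now $\overline{B_{1}}$ and $\overline{B_{2}}$ are disjoint, the uniform hypothesis $T(\overline{B_{1}})\cap\overline{B_{2}}=\emptyset$ for all $T\in K$ plays the role the separating curve $\Sigma$ played in part (1), and the conditions on $A$ translate into the nesting relations $A(\overline{B_{2}})\subsetneq\overline{B_{2}}$ and $A^{-1}(\overline{B_{1}})\subsetneq\overline{B_{1}}$, together with the control of the $K$-translates of $\overline{B_{1}}$ and $\overline{B_{2}}$ coming from precise invariance of $H_{1}$ and $H_{2}$. I would put a non-trivial element of the HNN extension $K*_{\langle A\rangle}$ in reduced form $w_{0}A^{\epsilon_{1}}w_{1}\cdots A^{\epsilon_{k}}w_{k}$ (with no pinch $A^{-1}H_{2}A$ or $AH_{1}A^{-1}$), feed through it a base point taken in the non-empty complement of $\overline{B_{1}}\cup\overline{B_{2}}$, and argue as in part (1) that the element moves it, so that $\langle K,A\rangle\cong K*_{\langle A\rangle}$; here the hypothesis that each $H_{j}$ is its own normalizer in $K$ is exactly what prevents an unexpected torsion or parabolic element from appearing via a relation $A^{-1}hA\in K$, so that under it every torsion or parabolic element of $K_{A}$ is conjugate into $K$. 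A fundamental domain for $K_{A}$ is $D\cap E_{1}\cap E_{2}$, where $E_{j}=\widehat{\mathbb C}\setminus\overline{B_{j}}$ and $D$ is a suitably chosen fundamental domain for $K$, and discreteness, geometric finiteness and total disconnectedness of the limit set follow as before.

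The main obstacle, in both parts, is the fundamental-domain step: showing that the pull-back algorithm terminates and that $\overline{D}$ meets every $K$-orbit in $\Omega$ exactly as required. This is where the precise-invariance hypotheses — together, in part (2), with the uniform disjointness $T(\overline{B_{1}})\cap\overline{B_{2}}=\emptyset$ and the self-normalizing condition on the $H_{j}$ — must be used in full, and it is the technically heaviest portion of Maskit's original treatment; the ping-pong bookkeeping of the first step, though it has the mildly annoying "returning word" case, is comparatively routine once the nesting inclusions are established.
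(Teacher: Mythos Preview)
The paper does not prove this statement at all: Theorem~\ref{KMC} is stated in the preliminaries section as a known result, with citations to Maskit's original papers \cite{Maskit:Comb, Maskit:Comb4}, and no proof is given or sketched. So there is no ``paper's own proof'' to compare your proposal against.

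That said, your outline is broadly the classical one --- the ping-pong/normal-form argument for the algebraic structure, a fundamental domain built by intersecting domains for the factors, and reading off discreteness, torsion/parabolic classification, geometric finiteness, and limit-set structure from that domain --- and is essentially the strategy Maskit himself follows in the cited references and in his book \cite{Maskit:book}. A couple of points deserve more care than your sketch suggests. First, your nesting claim $V(\overline{B_1})\subseteq\overline{B_2}$ for $V\in K_1\setminus H$ is not quite what precise invariance gives directly: precise invariance says $V(B_1)\cap B_1=\emptyset$, and to get the inclusion into $\overline{B_2}$ you need that $V(B_1)$ is connected and disjoint from $B_1$, hence lies in the complementary disc; this is fine, but the boundary behaviour (points of $\Lambda(H)$ on $\Sigma$) needs to be tracked. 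Second, the termination of your pull-back algorithm and the claim that the limit set description forbids continua are exactly the substantial parts of Maskit's proof, and your sketch acknowledges but does not really address them; in particular, the shrinking of spherical diameters is not automatic from the nesting alone and requires a uniform contraction estimate that uses the discreteness of the factors.
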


\subsection{Maskit's decomposition of (extended) function groups}\label{Sec:function}
A  finitely generated (extended) Kleinian group $K$ is called an {\it (extended) function group} if it has a $G$-invariant connected component $\Delta$ of its region of discontinuity $\Omega$. Note that (Extended) virtual Schottky groups are examples of (extended) function groups.


Basic examples of function groups are provided by:
(i) {\it elementary  groups}, that is, Kleinian groups with finite limit set (so of cardinality at most $2$); 
(ii) {\it quasifuchsian groups}, that is, function groups whose limit set is a Jordan curve (so its region of discontinuity consists of two invariant connected components); (iii) {\it totally degenerate groups}, that is, non-elementary finitely generated Kleinian  groups whose region of discontinuity is both connected and simply-connected. Similarly, basic examples of extended function groups are provided by:
(iv) {\it extended elementary groups}, that is, extended Kleinian groups with finite limit set; (v) {\it extended quasifuchsian groups}, that is, finitely generated extended function groups whose limit set is a Jordan curve;
(vi) {\it extended totally degenerate groups}, that is,  non-elementary extended finitely generated Kleinian groups with connected and simply-connected region of discontinuity. 

Maskit obtained the following geometrical decomposition picture, in terms of Klein-Maskit's combination theorems, of function groups.

\begin{theo}[Structure of (extended) function groups \cite{Maskit:construction, Maskit:decomposition, Maskit:function1, Maskit:function, H:extendedfunction}]\label{function}
Every function group is constructed from a finite collection of elementary groups, quasifuchsian groups and totally degenerate groups by a finite number of applications of the Klein-Maskit combination theorems. Similarly, every extended function group is constructed from (extended) elementary groups, (extended) quasifuchsian groups and (extended) totally degenerate groups by a finite number of applications of the Klein-Maskit combination theorems. 
\end{theo}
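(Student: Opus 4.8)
The plan is to argue by induction on a complexity invariant of the function group. By the Ahlfors finiteness theorem, if $K$ is a finitely generated function group with invariant component $\Delta$, the quotient orbifold $S=\Delta/K$ has finite type, so one may attach to $K$ a nonnegative integer $n(K)$ built from the genus of $S$ together with the rank of a maximal free-product/HNN decomposition of $K$ still available. The base cases are exactly the groups declared basic — elementary groups (finite limit set), quasifuchsian groups (Jordan curve limit set, two invariant components), and totally degenerate groups (connected simply connected region of discontinuity) — on which $n$ is minimal and no further decomposition is possible. For an extended function group $K$ one first passes to the index-two subgroup $K^{+}=K\cap{\mathbb M}$, which is again a function group with invariant component $\Delta$, decomposes it using an equivariant choice of structure loops, and then records how a fixed anticonformal generator of $K\setminus K^{+}$ acts on the resulting pieces: it either preserves each basic factor (yielding an extended basic factor) or interchanges two of them. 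This reduces the extended case to the conformal one together with the known list of anticonformal symmetries compatible with each type of basic factor.

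For the inductive step assume $K$ is not basic. First I would split on whether $\Delta$ is simply connected. If it is, then $K$ is a $B$-group which, not being quasifuchsian or totally degenerate, must carry an accidental parabolic transformation $P$; its fixed point lies on a simple loop $w\subset\Delta$ bounding discs on both sides and precisely invariant under $\langle P\rangle$ in $K$. If $\Delta$ is not simply connected, then the loop theorem together with Maskit's planarity argument produces an essential simple loop $w\subset\Delta$ — not freely homotopic into a puncture and not bounding a disc — precisely invariant under a trivial or finite cyclic subgroup $H<K$, with the two discs it cuts off satisfying the disjointness hypotheses of Theorem \ref{KMC}. In either situation, according as the image of $w$ separates $S$ or not, the converse direction of the Klein--Maskit combination theorem realizes $K$ as an amalgamated free product $K_{1}\ast_{H}K_{2}$ or as an HNN-extension $\langle K_{1},A\rangle$ along $H$, where the $K_{i}$ are again function groups with $n(K_{i})<n(K)$. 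Iterating, the process terminates because $n$ is a nonnegative integer that strictly drops at every step, and unwinding the recursion exhibits $K$ as the asserted finite sequence of Klein--Maskit combinations of basic groups; the extended statement follows by transporting this decomposition through the $K^{+}$-reduction described above.

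The main obstacle is the geometric core: producing the structure loop $w$ with all the precise-invariance and disjointness properties demanded by Theorem \ref{KMC}. In the $B$-group case this is the classification of accidental parabolics and the verification that a suitable collar of $w$ is a precisely invariant disc; in the non-simply-connected case it is an equivariant loop-theorem argument on $\Delta$ that controls the stabilizer of $w$ and guarantees that, after a small isotopy, its collars are genuinely precisely invariant discs whose translates are pairwise disjoint. This is exactly the content of Maskit's original work and of its extension to Kleinian groups with anticonformal elements. A secondary technical point is calibrating the complexity function $n(K)$ so that it is simultaneously a nonnegative integer, strictly subadditive under both combination moves, and minimal precisely on the basic groups; with such an $n$ in hand, termination and the overall induction become routine.
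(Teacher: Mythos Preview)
The paper does not prove this theorem at all: it is stated in Section~\ref{Sec:function} purely as a cited result, with the references \cite{Maskit:construction, Maskit:decomposition, Maskit:function1, Maskit:function, H:extendedfunction} attached, and is then used as a black box (for instance in the proofs of Corollary~\ref{coroSchottky} and Propositions~\ref{corovirtual} and~\ref{principal}). So there is no ``paper's own proof'' to compare against; your proposal is really a sketch of Maskit's original arguments and of the extension in \cite{H:extendedfunction}.

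As such a sketch, your outline is broadly faithful to the actual proofs: the dichotomy between $B$-groups (simply connected $\Delta$, decomposed via accidental parabolics) and non-simply-connected function groups (decomposed via Maskit's planarity theorem producing a precisely invariant structure loop), followed by the converse Klein--Maskit combination theorems to split $K$, is exactly the architecture of \cite{Maskit:decomposition, Maskit:function1, Maskit:function}. You are also right that the geometric core---manufacturing the structure loop with genuine precise invariance---is where all the work lies, and honest in deferring it to the cited references. Two caveats worth flagging: first, your complexity invariant $n(K)$ is left vague, and in Maskit's actual treatment the termination is controlled more concretely (via the signature data of $\Delta/K$ and the number of factors), so ``calibrating $n$'' is not quite as routine as you suggest; second, the extended case is not handled in \cite{H:extendedfunction} simply by passing to $K^{+}$ and tracking an involution after the fact---one must choose the system of structure loops equivariantly from the start, which requires an extended version of the planarity/loop-theorem input rather than a post-hoc symmetrization.
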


\section{Structure description of (extended) virtual Schottky groups} \label{Sec:estructura}
\subsection{On Schottky groups}
 If $G$ is a Schottky group of rank $g \geq 1$, then it is a purely loxodromic function group with a totally disconnected limit set.
Conversely, Theorem \ref{function} asserts that this provides tan equivalent definition of a Schottky group (this is originalyy due to Maskit).

\begin{coro}\label{coroSchottky}
A non-finite function group is a Schottky group if and only if it has a totally disconnected limit set and it is purely loxodromic.
\end{coro}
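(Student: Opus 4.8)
The plan is to prove both implications of Corollary \ref{coroSchottky}, using Theorem \ref{function} for the nontrivial direction. One implication is immediate: if $G$ is a Schottky group of rank $g \geq 1$, then by the basic facts recalled in the subsection on Schottky groups, $G$ is a non-finite purely loxodromic Kleinian group isomorphic to a free group, with non-empty (indeed connected and invariant) region of discontinuity, so it is a function group, and its limit set is known to be totally disconnected. Hence the ``only if'' direction needs no work beyond citing these standard facts.

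For the ``if'' direction, suppose $G$ is a non-finite function group with totally disconnected limit set $\Lambda$ and no elliptic or parabolic elements (i.e.\ purely loxodromic). First I would invoke Theorem \ref{function}: $G$ is built from elementary groups, quasifuchsian groups and totally degenerate groups by finitely many applications of the Klein-Maskit combination theorems. The strategy is to show that, under our hypotheses, the only building blocks that can appear are \emph{trivial} and \emph{infinite cyclic loxodromic} groups, and that the amalgamations are all over the trivial group while the HNN-extensions are all over the trivial group by a loxodromic element — which is precisely the inductive description of a Schottky group (a free product, in the Klein-Maskit sense, of $g$ cyclic loxodromic groups). Concretely: a quasifuchsian group has a Jordan curve as limit set, and a totally degenerate group is non-elementary with connected region of discontinuity, so its limit set is connected with more than one point; both are incompatible with $\Lambda$ being totally disconnected, because (by the combination theorem statements) the limit set of the combined group contains a copy of the limit set of each building block. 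This rules out quasifuchsian and totally degenerate pieces entirely. An elementary building block has limit set of cardinality $\leq 2$; since $G$ is purely loxodromic (so contains no parabolics and no elliptics), any such elementary subgroup is either trivial or infinite cyclic generated by a loxodromic element.

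Next I would pin down the amalgamation data. In the amalgamated free product step of Theorem \ref{KMC}, the amalgamated subgroup $H$ is cyclic — trivial, finite, or parabolic-generated; finite forces elliptics and parabolic is excluded, so $H$ is trivial. In the HNN step, the associated subgroups $H_j$ are again trivial, finite, or parabolic-generated, hence trivial, and the stable letter $A$ is loxodromic. Thus every combination used in assembling $G$ is either a free product (amalgamated over $\{1\}$) or an HNN-extension over $\{1\}$ adding one loxodromic generator. Starting from trivial and infinite-cyclic-loxodromic pieces, such a sequence of operations produces exactly a group generated by finitely many loxodromic transformations, free of that rank; an easy induction (tracking the precisely invariant discs supplied by the combination theorem at each stage, which assemble into the fundamental domain with $2g$ boundary loops required in the definition) shows the resulting configuration of loops and identifications is a Schottky configuration. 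Since $G$ is non-finite, the rank $g$ is at least $1$, and we conclude $G$ is a Schottky group.

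The main obstacle I anticipate is the bookkeeping in the last step: ensuring that the finite tree/graph of combinations, with all vertex groups trivial or loxodromic-cyclic and all edges trivial, can genuinely be re-packaged as a \emph{single} Schottky configuration — i.e.\ producing the $2g$ pairwise disjoint loops bounding a common $2g$-connected region with the pairing property — rather than merely a free group of loxodromics. This is where one must use the geometric content of Theorem \ref{KMC} (the precisely invariant discs and their disjointness) to nest the domains correctly; I would argue it by induction on the number of combination steps, at each step either adjoining a new pair of loops (HNN case) or gluing two fundamental domains along a shared curve and then forgetting that curve (free product case), appealing to Chuckrow's theorem only if one wants to avoid checking that a chosen generating set is a Schottky set. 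Everything else — the exclusion of quasifuchsian, degenerate, finite-elementary, and parabolic pieces — is a direct reading-off from the hypotheses and the statements of the two cited theorems.
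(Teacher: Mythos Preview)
Your proposal is correct and follows essentially the same route as the paper, which simply records the corollary as an immediate consequence of Theorem~\ref{function} (and attributes the characterization to Maskit). Your final ``bookkeeping'' worry is unnecessary: once you have shown $G$ is a purely loxodromic free group with non-empty region of discontinuity, Maskit's characterization \cite{Maskit2} (already quoted in the paper) gives the Schottky configuration directly, with no need to track the discs by hand or invoke Chuckrow.
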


\subsection{On (extended) virtual Schottky groups}
Theorem \ref{function} also has the following generalization of Corollary \ref{coroSchottky} at the level of  (extended) virtual Schottky groups. As finite groups are (extended) virtual Schottky group, we only need to take care of the non-finite (extended) function groups.

\begin{prop}\label{corovirtual}
A non-finite (extended) function group is an (extended) virtual Schottky group if and only if it has a totally disconnected limit set and it has no parabolic elements. 
\end{prop}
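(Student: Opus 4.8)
The plan is to prove both implications, using Theorem \ref{function} (Maskit's decomposition) as the central tool.

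First I would handle the easy direction. Suppose $K$ is a non-finite (extended) virtual Schottky group, so by definition $K$ contains a Schottky group $G$ of finite index, which we may assume to be normal. Since $G$ is purely loxodromic with totally disconnected limit set, and $\Lambda(K)=\Lambda(G)$ (a finite-index subgroup has the same limit set as the whole group, this being a standard fact for Kleinian groups), the limit set of $K$ is totally disconnected. For the absence of parabolics: if $A\in K$ were parabolic, then some power $A^{n}$ (with $n$ the index, say) would lie in $G$; but a non-trivial power of a parabolic is again parabolic, contradicting that $G$ is purely loxodromic. Hence $K$ has no parabolic elements. (Note that extended elements cannot themselves be parabolic in the relevant sense, and in any case one argues with $K^{+}$.)

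For the converse — the substantive direction — suppose $K$ is a non-finite (extended) function group with totally disconnected limit set and no parabolic elements. Apply Theorem \ref{function}: $K$ is built from a finite collection of (extended) elementary, (extended) quasifuchsian and (extended) totally degenerate groups by finitely many applications of the Klein-Maskit combination theorems. The strategy is to rule out all the ``bad'' building blocks. A quasifuchsian group has a Jordan curve limit set, which is connected, not totally disconnected; since the Klein-Maskit combinations preserve total disconnectedness of limit sets (as stated in Theorem \ref{KMC}), and since each factor's limit set embeds in $\Lambda(K)$ up to the action of $K$, no quasifuchsian factor can occur. Similarly a totally degenerate group is non-elementary with connected (indeed simply connected) region of discontinuity, hence its limit set is connected and non-trivial, so it too is excluded. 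Likewise, an elementary factor with a parabolic, or a rank-one elementary factor $\langle A\rangle$ with $A$ parabolic, or an amalgamation/HNN over an infinite cyclic parabolic subgroup, would force a parabolic into $K$ by the conjugacy statements in Theorem \ref{KMC}; these are excluded by hypothesis. The only remaining elementary factors are the trivial group and finite cyclic (elliptic) groups, and — since $K$ itself is non-finite — the combinations must include at least one step that is not purely a finite amalgamation, i.e. some HNN-extension producing a loxodromic generator. Thus $K$ is assembled, via Klein-Maskit combinations over trivial or finite cyclic groups, from finite (elementary) pieces together with HNN-extensions by loxodromic elements.

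It then remains to identify such a $K$ as an (extended) virtual Schottky group, i.e. to exhibit a finite-index (normal) Schottky subgroup. Here I would invoke the structural picture already used in the paper: passing to $K^{+}$ if necessary and taking a torsion-free finite-index normal subgroup $G\triangleleft K$ (which exists by Selberg's lemma, since $K$ is finitely generated and linear), one gets that $G$ is a finitely generated function group with totally disconnected limit set, purely loxodromic (torsion-free plus no parabolics, the latter inherited from $K$), hence a Schottky group by Corollary \ref{coroSchottky}. Then $G$ has finite index in $K$, and since $G$ has positive rank ($K$ being non-finite and non-elementary), $G$ may be taken normal of finite index, so $K$ is an (extended) virtual Schottky group by definition. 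The main obstacle I anticipate is the careful bookkeeping in the converse: making sure that the limit sets of the Maskit building blocks genuinely sit inside $\Lambda(K)$ (so that connectedness of a factor's limit set would contradict total disconnectedness of $\Lambda(K)$), and that the ``conjugated to an element of a factor'' clauses of Theorem \ref{KMC} are applied correctly to push a hypothetical parabolic back into a factor — together with verifying that torsion-freeness of the Selberg subgroup combined with absence of parabolics really yields \emph{purely loxodromic}, so that Corollary \ref{coroSchottky} applies.
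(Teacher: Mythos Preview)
Your proof is correct, and its essential content coincides with the paper's: the forward direction is the same finite-index argument, and your final paragraph for the converse (pass to $K^{+}$, apply Selberg's lemma to obtain a torsion-free finite-index normal subgroup $G$, observe $G$ is a purely loxodromic function group with totally disconnected limit set, hence Schottky by Corollary~\ref{coroSchottky}) is exactly the paper's proof.

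The difference is that your middle paragraph, invoking Theorem~\ref{function} to rule out quasifuchsian, totally degenerate, and parabolic-carrying elementary factors, is superfluous for this proposition. Once you apply Selberg's lemma and Corollary~\ref{coroSchottky}, nothing from that structural analysis is used; the paper simply omits it. (That analysis is essentially the content of the \emph{next} result, Proposition~\ref{principal}, where the explicit building-block description is the goal.) So your approach is not genuinely different, just longer: you can delete the Maskit-decomposition paragraph without loss, and the ``main obstacle'' you anticipate about bookkeeping of factor limit sets never actually arises.
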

\begin{proof}
One direction is clear, if $K$ is a non-finite (extended) virtual Schottky group, then it contains, as a finite index subgroup, a Schottky group $G$ of rank $g \geq 1$. The finite index condition asserts that they have the same limits set (so totally disconnected) and that $K$ has no parabolic elements.
In the other direction, let $K$ be a  non-finite (extended) function group with  totally disconnected limit set and containing no parabolic elements. If $K^{+}=K\cap {\mathbb M}$, then $K^{+}$ has index at most two in $K$ and, in particular, it is a function group with totally disconnected limit set and without parabolic elements. As a consequence of Selberg's lemma \cite{Selberg}, there is a finite index torsion free normal subgroup $G$ of $K^{+}$. It follows that $G$ is a purely loxodromic function group with totally disconnected limit set, so a Schottky group (by Corollary \ref{coroSchottky}), and $K$ is an (extended) virtual Schotky group.
\end{proof}

In the particular case of (extended) virtual Schottky groups, Theorem \ref{function} can be written as follows.

\begin{prop}\label{principal}
A function group is a virtual Schottky group if and only if it is constructed from finite subgroups of ${\mathbb M}$ and loxodromic cyclic groups by a finite number of applications of Klein-Maskit's combination theorems. Similarly, an extended function group is an extended virtual Schottky group if and only if it is constructed from finite subgroups of $\widehat{\mathbb M}$, loxodromic/pseudo-hyperbolic cyclic groups by a finite number of applications of Klein-Maskit's combination theorems.
\end{prop}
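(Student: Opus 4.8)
The plan is to reduce everything to Proposition~\ref{corovirtual}, which singles out the non-finite (extended) virtual Schottky groups among (extended) function groups as exactly those with totally disconnected limit set and no parabolic elements, and then to feed this into Maskit's structure theorem (Theorem~\ref{function}) together with the clauses of the Klein--Maskit combination theorems (Theorem~\ref{KMC}) asserting that total disconnectedness of the limit set and the absence of elliptic/parabolic elements pass to the combined group. Throughout I will use the elementary fact that $\Lambda(H)\subseteq\Lambda(L)$ whenever $H\leq L$, so that a subgroup of a group with totally disconnected limit set (resp. without parabolics) again has totally disconnected limit set (resp. no parabolics).

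For the ``if'' direction I would argue as follows. Finite subgroups of $\mathbb{M}$ (resp. $\widehat{\mathbb{M}}$) have empty limit set and no parabolics, and loxodromic (resp. loxodromic or pseudo-hyperbolic) cyclic groups have a two-point limit set and no parabolics; all of these are finitely generated function groups. Performing finitely many Klein--Maskit combinations of such groups, the clauses of Theorem~\ref{KMC} on limit sets and parabolics keep the limit set totally disconnected and keep the group free of parabolic elements, and the standard theory (as in Theorem~\ref{function}) guarantees that the outcome is again a finitely generated function group. If that outcome is finite it is a finite (extended) Kleinian group, hence an (extended) virtual Schottky group by definition; otherwise Proposition~\ref{corovirtual} identifies it as an (extended) virtual Schottky group.

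For the ``only if'' direction, start with an (extended) virtual Schottky group $K$ which is an (extended) function group. The finite case is immediate, so assume $K$ is infinite; by Proposition~\ref{corovirtual} its limit set is totally disconnected and $K$ has no parabolics. Apply Theorem~\ref{function} to express $K$ as a finite iteration of Klein--Maskit combinations of (extended) elementary, (extended) quasifuchsian and (extended) totally degenerate groups, with some loxodromic (or pseudo-hyperbolic) elements serving as stable letters of HNN-extensions. I would then discard every forbidden factor: a quasifuchsian factor has a Jordan curve for a limit set, and a totally degenerate factor has connected, simply-connected region of discontinuity, hence a connected limit set with more than two points, so neither can lie inside the totally disconnected $\Lambda(K)$; and an elementary factor with exactly one limit point necessarily contains parabolics, against the hypothesis. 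Hence every vertex group is either finite or is elementary with a two-point limit set and without parabolics, while the stable letters generate loxodromic (or pseudo-hyperbolic) cyclic groups.

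The last step, which I expect to be the only genuine obstacle, is to show that an elementary group $E$ with a two-point limit set and no parabolics is itself obtained, by Klein--Maskit combinations, from finite subgroups and loxodromic/pseudo-hyperbolic cyclic groups, and then to substitute this refinement into the decomposition of $K$. Such an $E$ is virtually cyclic: the stabiliser of one of its two fixed points has index $1$ or $2$ in $E$ and is an infinite loxodromic (in the extended case possibly pseudo-hyperbolic) cyclic group; when that index is $2$, $E$ is infinite dihedral and is realized as an amalgamated free product over the trivial group of two order-two subgroups (or of a loxodromic/pseudo-hyperbolic cyclic group and a finite subgroup). The delicate part is the geometric bookkeeping in these small cases: choosing precisely invariant discs that verify the hypotheses of Theorem~\ref{KMC}(1), together with the routine check that ``being a function group'' is preserved along the entire construction. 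Once this is in place, the two sides of the stated equivalence coincide and the proof is complete.
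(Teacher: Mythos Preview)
Your approach is essentially the same as the paper's: both directions go through Proposition~\ref{corovirtual}, and in the ``only if'' direction both you and the paper invoke Theorem~\ref{function}, eliminate (extended) quasifuchsian and totally degenerate factors via the totally disconnected limit set, eliminate one-point elementary factors via the no-parabolics condition, and then decompose the remaining two-point elementary factors into the required basic pieces.

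One small correction in that last step: the pointwise stabiliser of the two limit points of such an $E$ need not be loxodromic cyclic; in general it is $\langle L\rangle\times\langle \varepsilon\rangle\cong\mathbb{Z}\times\mathbb{Z}_n$ with $L$ loxodromic (or pseudo-hyperbolic) and $\varepsilon$ elliptic sharing the same axis. This is harmless for your purposes, since $\langle L,\varepsilon\rangle$ is visibly an HNN-extension (in the sense of Theorem~\ref{KMC}(2)) of the finite cyclic group $\langle\varepsilon\rangle$ by the loxodromic $L$, and the index-two overgroup (when present) adds an order-two element swapping the limit points, yielding the free-product-of-two-involutions or HNN-by-two-elliptics descriptions that the paper records. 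With that adjustment your argument and the paper's coincide.
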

\begin{proof}
(1) If $K$ is an (extended) function group constructed, by Klein-Maskit's combination theorems, using the groups as in the theorem, then  either: (i) $K$ is finite, in particular, an (extended) virtual Schottky group or (ii) $K$ is non-finite with totally disconnected limit set and containing no parabolic elements, so an (extended) virtual Schottky group by Corollary \ref{corovirtual}.
(2) If $K$ is a non-finite (extended) virtual Schottky group, then it contains a Schottky group $G$ as a finite index subgroup. In particular, $K$ and $G$ both have the same region of discontinuity $\Omega$, so $K$ is a geometrically finite (extended) function group with a totally disconnected limit set $\Lambda$, and $K$ has no parabolic transformations. It then follows, from Theorem \ref{function}, that $K$ is only constructed using (extended) elementary groups without parabolic elements, that is, 
finite groups and finite index extension of either cyclic loxodromic groups or cyclic pseudo-hyperbolic groups. The finite index extension of cyclic loxodromic groups are either free products of two cyclic groups of order two or a HNN-extension of a finite cyclic group by two elliptics of order two or a HNN-extension of a finite cyclic group by a loxodromic transformation (similar situation happens for finite extensions of cyclic pseudo-hyperbolic groups).
\end{proof}

\section{A geometrical structural description in the abelian case}\label{Sec:casoabeliano}

\subsection{Basic virtual Schottky groups}\label{Sec:basicos}
We first describe some particular examples of virtual Schottky groups $K$ (called the basic ones), which will be used in Theorem \ref{Abel}. 

\s
\noindent
{\bf (B1).} 
The first basic virtual Schottky groups $K$ are (i) the finite abelian subgroups of ${\mathbb M}$ (the trivial Schottky group $G$ as finite index) together the cyclic loxodromic ones (in this case $K=G$). We classify them into three types as follows.

\s
\noindent
{\bf Basic virtual Schottky groups of type (T1):} 
Finite cyclic groups. These are conjugated to $\langle E(z)=e^{2 \pi i/n} z \rangle \cong {\mathbb Z}_{n}$, for $n \geq 0$  integer. 

\s
\noindent
{\bf Basic virtual Schottky groups of type (T2):}
Cyclic groups generated by a loxodromic transformation. These are conjugated to $\langle L(z)=\lambda z\rangle \cong {\mathbb Z}$, where $|\lambda|>1$.

\s
\noindent
{\bf Basic virtual Schottky groups of type (T3):}
Groups isomorphic to ${\mathbb Z}_{2}^{2}$. These are conjugated to $\langle U(z)=-z, V(z)=1/z\rangle \cong {\mathbb Z}_{2}^{2}$.

\s
\noindent
{\bf (B2).} 
The second list of basic virtual Schottky groups are obtained from HNN-extensions, in the sense of Klein-Maskit's combination theorem, of a non-trivial finite abelian group by loxodromic transformations.
Let start with a non-trivial finite abelian group $K^{*}<{\mathbb M}$ (so it is of either type (T1) or (T3)). Let $U_{1},\ldots, U_{n} \in K\setminus \{I\}$ be such that: (i) $\langle U_{j} \rangle$ is not a proper subgroup of another cyclic subgroup of $K^{*}$ and (ii)  $\langle U_{1} \rangle, \ldots, \langle U_{n} \rangle$ are different. So: (i) $n=1$ for $K^{*}$ of type (T1), and (ii) $n \in \{1,2,3\}$ for $K^{*}$ of type (T3).

We now consider pairwise disjoint simple loops $W_{1,j}, W_{2,j}$, each one invariant under $U_{j}$ (in particular, each one separates the two fixed points of $U_{j}$). This choice is also made in order that all the $K^{*}$-translates of them produces a pairwise disjoint collection of loops. Choose a loxodromic transformation $A_{j}$ such that $A(W_{1,j})=W_{2,j}$ (sending the disc bounded by $W_{1,j}$ and disjoint from $W_{2,j}$ to the complement of the disc bounded by $W_{2,j}$ not containing $W_{1,j}$) and which commutes with $U_{j}$. (This is always possible if these loops are chosen to be circles.) 
Using such loxodromic transformations, we may consider the group $K=\langle K^{*},A_{1},\ldots,A_{n}\rangle$. By Klein-Maskit's combination theorem,  $K$ is an HNN-extension of $K^{*}$ by the elements $A_{1},\ldots, A_{n}$. It can be seen that the group $G=\langle 
TA_{j}T^{-1}: T \in K^{*}\rangle$ is a Schottky group  which is a normal finite index subgroup of $K$ such that $K/G \cong {\mathbb Z}_{2}^{2}$. The groups we obtain, in this type of construction, are the following ones.

\s
\noindent
{\bf Basic virtual Schottky groups of type (T4):}
Groups conjugated to $K=\langle  A(z)=\lambda z, E(z)=e^{2 \pi i/n}z\rangle \cong {\mathbb Z} \times {\mathbb Z}_{n}$, where $|\lambda| >1$. In this case, $G=\langle A \rangle$ and $K/G \cong {\mathbb Z}_{n}$.

\s
\noindent
{\bf Basic virtual Schottky groups of type (T5):}
Groups conjugated to $K=\langle U(z)=-z, V(z)=1/z, A(z)=\lambda z\rangle$, where $|\lambda|>1$.
In this case, $G=\langle A \rangle$ and $K/G \cong {\mathbb Z}_{2}^{2}$.

\s
\noindent
{\bf Basic virtual Schottky groups of type (T6):}
Groups conjugated to $K=\langle U(z)=-z, V(z)=1/z, A(z)=\lambda_{1} z, B(z)=((\lambda_{2}+1)z+(1-\lambda_{2}))/((1-\lambda_{2})z+(\lambda_{2}+1)) \rangle $, where $|\lambda_{j}|>1$, (so $AU=UA$ and $BV=VB$). 
In this case, $G=\langle A,B\rangle$ (a Schottky group of rank two) and $K/G \cong {\mathbb Z}_{2}^{2}$.

\s
\noindent
{\bf Basic virtual Schottky groups of type (T7):}
Groups conjugated to $K=\langle U(z)=-z, V(z)=1/z, A(z)=\lambda z, B(z)=((\lambda_{2}+1)z+(1-\lambda_{2}))/((1-\lambda_{2})z+(\lambda_{2}+1)), C(z)=((\lambda_{3}+1)z+i(1-\lambda_{3}))/(i(\lambda_{3}-1)z+(\lambda_{3}+1))\rangle$, where $|\lambda_{j}|>1$, (so $AU=UA$, $BV=VB$ and $CUV=UVC$). In this case, $G=\langle A, B, C\rangle$ (a Schottky group of rank three) and $K/G \cong {\mathbb Z}_{2}^{2}$.

\s
\noindent
{\bf (B3).} The third type of basic virtual Schottky groups are obtained as an amalgamated free product of some finite number of copies of groups of types (T3), (T5) and/or (T6). If $K_{1}$ and $K_{2}$ are two of them such that $K_{1} \cap K_{2}=\langle U_{1}\rangle \cong {\mathbb Z}_{2}$, then we may perform the free amalgamated products $K_{12}:=K_{1} *_{\langle U_{1} \rangle} K_{2}$ (in the sense of Klein-Maskit's combination theorem). Next, we consider a third one $K_{3}$ such that $K_{12} \cap K_{3}=\langle U_{2}\rangle \cong {\mathbb Z}_{2}$, where $U_{2} \neq U_{1}$. Then we again perform the free amalgamated products $K_{123}:=K_{12} *_{\langle U_{2} \rangle} K_{3}$. We continue with this process a finite number of times.

\begin{rema}
If $K$ is a basic virtual Schottky group, and its region of discontinuity is $\Omega$, then the orbifold $\Omega/K$ is:
(i) of signature $(0;n,n)$ for (T1), (ii) torus for (T2) and (T4), (iii)  of signature $(0;2,2,2)$ for type (T3), (iv) of signature $(0;2,2,2,2)$ for type (T5), (v) of signature $(0;2,2,2,2,2)$ for type (T6), (vi) of signature $(0;2,2,2,2,2,2)$ for type (T7) and (vii) of signature $(0;2,\ldots,2)$ for type (B3).
\end{rema}

\subsection{Main result}
The following states an structural decomposition of those virtual Schottky groups $K$, admitting a Schottky group $G$ as a finite index normal subgroup such that $K/G$ is an abelian group (which generalizes the one obtained in \cite{Hidalgo:Cyclic}).

\begin{theo}\label{Abel}
{\rm (1)} A Kleinian group $K$ constructed as a free group, in the sense of Klein-Maskit's combination theorem, of basic virtual Schottky groups is a virtual Schottky group containing a Schottky group $G$, as a finite index normal subgroup and such that $K/G$ is an abelian group.
{\rm (2)} A virtual Schottky group $K$, containing a Schottky group $G$
as a finite index normal subgroup and such that $K/G$ is an abelian group, is the free product, in the sense of Klein-Maskit's combination theorem, of a finite collection of basic virtual Schottky groups.
\end{theo}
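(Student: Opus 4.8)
Part (1) is the routine direction, and I would argue it by induction on the number of Klein--Maskit steps. By construction, each basic virtual Schottky group $K_i$ of types (T1)--(T7) or (B3) contains a Schottky subgroup $G_i$ (possibly trivial) that is normal of finite index, with $A_i:=K_i/G_i$ a \emph{finite} abelian group (cyclic, $\mathbb{Z}_2^2$, or a product of copies of $\mathbb{Z}_2$). Suppose now $K$ is obtained from groups already known to carry a finite-index normal Schottky subgroup with finite abelian quotient; by the no-parabolics hypothesis the amalgamating/associated subgroups in Theorem~\ref{KMC} are trivial or finite cyclic, so by Theorem~\ref{KMC} and Proposition~\ref{corovirtual} the new group $K$ is again a virtual Schottky group, and it admits a surjection onto the finite abelian pushout of $A_1,A_2$ over the image of the amalgamating subgroup (respectively onto $A_1$, killing the stable loxodromic letter of an HNN step). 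The kernel $N$ of that surjection has finite index; it is torsion free because every torsion element of an amalgamated product or HNN-extension is conjugate into a factor, where it has nontrivial image in $A_i$ (the $G_i$ being torsion free); and $N$ is purely loxodromic with totally disconnected limit set (being of finite index in $K$), hence a Schottky group by Corollary~\ref{coroSchottky}. Iterating gives a finite-index normal Schottky subgroup $G\triangleleft K$ with $K/G$ abelian.

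For Part (2) I would follow the outline of the Introduction. If $G$ has rank $0$ then $K=H$ is a finite abelian subgroup of $\mathbb{M}\cong\mathrm{PSL}_2(\mathbb{C})$, hence cyclic or $\mathbb{Z}_2^2$, i.e.\ basic of type (T1) or (T3). Otherwise, fix a Schottky uniformization $P:\Omega\to S=\Omega/G$. Since $H=K/G$ lifts with respect to it, Theorem~\ref{teo:lift} furnishes a (minimal) $H$-invariant defining set of loops $\mathcal{F}=\{\alpha_m\}\subset S$ with $S\setminus\mathcal{F}$ planar; set $\widehat{\mathcal{F}}=P^{-1}(\mathcal{F})$. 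Because $\mathcal{F}$ is a defining set, each $\alpha_m$ lifts to loops, so $\widehat{\mathcal{F}}$ is a collection of pairwise disjoint simple loops whose $K$-translates are pairwise disjoint (the structural loops), $P$ maps each component $R$ of $\Omega\setminus\widehat{\mathcal{F}}$ (a structural region) homeomorphically onto a component of $S\setminus\mathcal{F}$, and $\widehat{\mathcal{F}}$ is $K$-invariant since $\mathcal{F}$ is $H$-invariant; $K$ acts on the finitely many orbits of structural loops and regions.

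Next is the local analysis. For a structural region $R$, an element of $G$ stabilizing $R$ descends to the identity on $P(R)$, hence is trivial since $P|_R$ is injective; thus $G\cap K_R=\{I\}$, so $K_R$ embeds in $H$, and being a finite abelian subgroup of $\mathrm{PSL}_2(\mathbb{C})$ it is trivial, finite cyclic, or $\cong\mathbb{Z}_2^2$. I would then enlarge $K_R$ by the loxodromic elements of $K$ that pair boundary loops of $R$ to one another; the pairwise disjointness of the $K$-translates of the structural loops supplies the precisely invariant discs needed for Theorem~\ref{KMC}(2), and the abelianness of $H$ (together with the fact that commutators of lifts land in the torsion-free group $G$, which pins down the conjugation action of these loxodromics on the elliptic subgroup $K_R$) forces $\langle K_R,\ \text{these loxodromics}\rangle$ to be precisely one of the basic virtual Schottky groups of types (T1)--(T7). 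Finally, mirroring Remark~\ref{estructuradeH}, I would choose one structural region per $K$-orbit, adjoin common boundary loops with trivial or order-two stabilizer, glue the corresponding basic groups by Klein--Maskit amalgamated free products over those trivial or $\mathbb{Z}_2$ subgroups (the $\mathbb{Z}_2$-amalgamations of types (T3), (T5), (T6) being exactly the basic groups of type (B3)) to obtain $K^{*}<K$ and a connected set $\widetilde{R}$ with finitely many boundary structural loops, which come in pairs identified by loxodromic elements of $K$ with trivial stabilizer; the corresponding Klein--Maskit HNN-extensions of $K^{*}$ then recover $\langle K^{*},\ \text{these loxodromics}\rangle=K$, exhibiting $K$ as a Klein--Maskit combination of a finite collection of basic virtual Schottky groups.

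The step I expect to be the main obstacle is this last global assembly: checking that the discs bounded by the structural loops can be chosen precisely invariant with compatible (disjoint or properly nested) $K$-translates so that every amalgamation and HNN step is legitimate, that the pairing loxodromics satisfy the conjugacy relations required in Theorem~\ref{KMC}, and that the procedure terminates with all of $K$ rather than a proper subgroup --- equivalently, that $\widetilde{R}$ together with the chosen pairings is a fundamental structure for the action of $K$ on $\Omega\setminus\widehat{\mathcal{F}}$. A secondary technical point is the verification, in the local analysis, that abelianness of $H$ really does exclude every finite group outside the list and forces the commutation relations defining types (T4)--(T7).
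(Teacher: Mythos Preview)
Your outline matches the paper's approach closely: for Part~(1) the paper simply invokes Proposition~\ref{corovirtual} (your explicit construction of the abelian quotient via the map to $\prod K_i/G_i$ is actually more than the paper writes down), and for Part~(2) it follows exactly the structural-loop/structural-region strategy you describe, ending with a maximal connected union $\widetilde{R}$ of non-$K$-equivalent regions, free products of the enlarged stabilizers $K_j^{*}$, and a final round of HNN/free-product steps with the pairing loxodromics.

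One recalibration is worth making. You flag the global assembly as the main obstacle and the local analysis as a secondary technicality, but in the paper the weight is reversed. The substantive work (Propositions~\ref{fixedpoints} and~\ref{propo3}) is entirely local and hinges on the \emph{minimality} of~$\mathcal{F}$, which you mention only in passing: one must show that if $U\in K(\widehat{R})\setminus\{I\}$ has its fixed points outside $\widehat{R}$ (so $U$ stabilizes two boundary loops $W_1,W_2$), then either some loxodromic $L\in K$ carries $W_1$ to $W_2$, or the adjacent region across $W_1$ also has $\mathbb{Z}_2^2$ stabilizer meeting $K(\widehat{R})$ in $\langle U\rangle$; otherwise one could delete the $H$-orbit of $P(W_1)$ from $\mathcal{F}$ and still have a Schottky system, contradicting minimality. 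The abelianness of $H$ is used exactly here (Lemma~\ref{lema5.2}): no $T\in K$ can conjugate one involution of a $\mathbb{Z}_2^2$ stabilizer to another, because their images in the abelian group $K/G$ would then agree, forcing their product into the torsion-free group $G$. Once this trichotomy is established and each $K_j$ is enlarged to a basic $K_j^{*}$, the global gluing over $\widetilde{R}$ is routine Klein--Maskit bookkeeping; the precise-invariance and termination issues you worry about are handled automatically by the disjointness of the lifted loop system and the fact that $\widetilde{R}$ surjects onto $\Omega/K$.
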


\begin{rema}
If $K$ is constructed as free product, in the sense of Klein-Maskit's theorem, using the basic virtual Schottky groups, then the above theorem asserts that it is a virtual Schottky group and that it must have a Schottky group $G$ as a finite index normal subgroup such that $K/G$ is an abelian group. To construct explicitly such a Schottky group is, in general, not so easy to do. Another interesting question is to determine how many different, up to $K$-conjugation, such Schottky subgroups of minimal rank does $K$ have.
\end{rema}

\subsection{Example: structural description in the cyclic case}\label{Sec:casociclico}
In the particular case that $H=K/G \cong {\mathbb Z}_{n}$, where $n \geq 2$, $K$ is a virtual Schottky group and $G$ is a Schottky group, being a finite index normal subgroup of $K$, one may see that in Theorem \ref{Abel} the only groups to be used are of types (T1), (T2) and (T4) (see Figure \ref{dominio}). This, in particular, provides the description in \cite{Hidalgo:Cyclic}, which states that  
there are integers
$$\left\{\begin{array}{l}
a,b,c,d \in \{0,1,2,...\},\\
n_{1},...,n_{d} \in \{3,...,n\},\;
m_{1},...,m_{b} \in \{2,...,n\},\;
\mbox{$n_{j}$ and $m_{j}$ divisors of $n$},
\end{array}
\right.
$$
satisfying 
$g=n(a+b+c/2+d-1)+1 -n\sum_{j=1}^{d}1/n_{j},$
and either 
\begin{enumerate}
\item $a+b>0$; or
\item $a=b=0$, $c>0$ and $GCD(n/2,n/n_{1},...,n/n_{d})=1$; or
\item $a=b=c=0$ and $GCD(n/n_{1},...,n/n_{d})=1$,
\end{enumerate}
there are  loxodromic transformations $\tau_{1},..., \tau_{a}, \eta_{1},..., \eta_{b} \in K,$
and elliptic transformations
$\theta_{1},...,\theta_{b},\gamma_{1},...,\gamma_{c},\epsilon_{1},...,\epsilon_{d} \in K,$
such that 
\begin{itemize}
\item[(i)] the order of $\theta_{j}$ is $m_{j}$;
\item[(ii)] the order of $\gamma_{j}$ is $2$ (they only appear in the case $n$ is even);
\item[(iii)] the order of $\epsilon_{j}$ is $n_{j}$; and 
\item[(iv)] $\eta_{j} \circ \theta_{j} = \theta_{j} \circ \eta_{j}$ commute,
\end{itemize}
and there is a collection simple loops as shown in figure \ref{dominio}, such that $K$ is the free product (in the sense of the Klein-Maskit combination theorem) of the ``$a$" cyclic loxodromic groups $\langle \tau_{j} \rangle$, the ``$b$" cyclic groups of order two $\langle \gamma_{j}\rangle$, the ``$d$" cyclic elliptic groups $\langle \epsilon_{j}\rangle$ and the ``$b$" abelian groups $\langle \eta_{j}, \theta_{j}\rangle$, that is, 
$$K \cong {\mathbb Z} *\stackrel{a}{\cdots}* {\mathbb Z}
*({\mathbb Z} \oplus {\mathbb Z}_{m_{1}}) * \cdots
*({\mathbb Z} \oplus {\mathbb Z}_{m_{b}})*
{\mathbb Z}_{2} *\stackrel{c}{\cdots}* {\mathbb Z}_{2}*
{\mathbb Z}_{n_{1}} * \cdots * {\mathbb Z}_{n_{d}}.$$

\begin{figure}[ht]
\centering
\includegraphics[width=6.0cm]{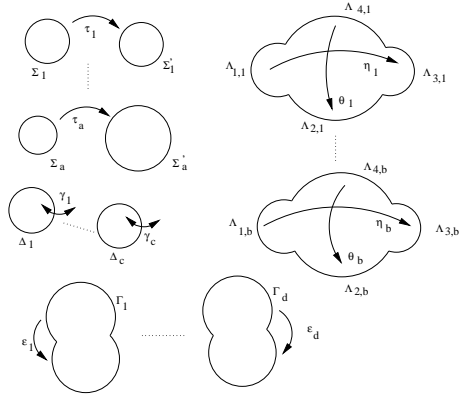}
\caption{The structural picture of $K$ for the cyclic case}
\label{dominio}
\end{figure}

\section{Proof of Theorem \ref{Abel}}\label{Sec:prueba}
\subsection{Proof of part (1)} A Kleinian group $K$ constructed as the free product, in the sense of Klein-Maskit's combination theorem, of basic virtual Schottky groups  is a function group with totally disconnected limit set and such that its non-loxodromic transformation are elliptic of finite order. It follows from Corollary \ref{corovirtual} that $K$ is a virtual Schottky group.

\subsection{Proof of part (2)} Let $K$ be a virtual Schottky group containing a Schottky group $G$, as a finite index normal subgroup and such that $H=K/G$ is an abelian group. If $G$ has rak $g \in \{0,1\}$ (so $K$ is elementary), then $K$ is one of the types (T1)-(T7). So, let us assume, from now on, that $G$ has rank $g \geq 2$ (that is, $K$ is non-elementary).

Set  $S=\Omega/G$ and let us fix a Schottky uniformization $(\Omega,G,P:\Omega \to S)$. As $G$ is normal subgroup of $K$, we have that $H$ lifts, with respect to $P$, to $K$.  Now, Theorem \ref{teo:lift} ensures the existence of a Schottky system of loops for $H$, say ${\mathcal F}=\{L_{1},...., L_{k}\}$ ($g \leq k \leq 3g-3$), corresponding to the above Schottky uniformization. We assume ${\mathcal F}$ to be minimal, that is, no proper sub-collection is a Schottky system of loops for $H$ corresponding to the above Schottky uniformization.

The covering map $P:\Omega \to S$ induces a surjective homomorphism $\Theta:K \to H$, whose kernel is  $G$, satisfying that $\Theta(k) \circ P= P \circ k$, for all  $k \in K$.

\subsubsection{Structure loops and structure regions}
As each loop in $\mathcal F$ lifts to loops under $P:\Omega \to S$, we may consider the collection $\widehat{\mathcal F} \subset \Omega$ of these lifted loops. A loop in $\widehat{\mathcal F}$ will be called an {\it structure loop} and each component of $\Omega \setminus \widehat{\mathcal F}$ an {\it structure region}. 

 Note that the minimality property of ${\mathcal F}$ ensures that each structural region must contain at least three boundary loops.

\subsubsection{$K$-stabilizers of structure loops and structure regions}
Let $W \in \widehat{\mathcal F}$ be an structure loop and let $\widehat{R} \subset \Omega \setminus \widehat{\mathcal F}$ be an structure region. Set $L=P(W) \in {\mathcal F}$ and $R=P(\widehat{R}) \subset S \setminus {\mathcal F}$.
We consider the corresponding stabilizers
$$
K(W)=\{U \in K: U(W)=W\}<K,\quad
H(L)=\{h \in H: h(L)=L\}<H,
$$
$$
K(\widehat{R})=\{U \in K: U(\widehat{R})=\widehat{R}\}<K,\quad
H(R)=\{h \in H: h(R)=R\}<H.
$$

 It follows that   $\Theta(K(W))=H(L)$ and $\Theta(K(\widehat{R}))=H(R)$.

As $G$ is torsion free, both restrictions $P:W \to L$ and $P:\widehat{R} \to R$ are homeomorphisms and both homomorphisms
$\Theta:K(W) \to H(L)$ and  $\Theta:K(\widehat{R}) \to H(R)$ are isomorphisms.

This, asserts that $K(\widehat{R})$ and $K(W)$ are finite abelian groups. As the finite abelian subgroups of ${\rm PSL}_{2}({\mathbb C})$ are either: (i) the trivial group, (ii) cyclic groups or (iii) isomorphic to ${\mathbb Z}_{2}^{2}$, the following lemma follows.

\begin{lemm}\label{lema2}
{\rm (1)} Let $W \in \widehat{\mathcal F}$ and let $\widehat{R}_{1}$ and $\widehat{R}_{2}$ be the two structure regions containing $W$ on their boundaries. Then $K(W)$ is either: (i) trivial, (ii) cyclic or (iii) isomorphic to ${\mathbb Z}_{2}^{2}$. Moreover, {\rm (1.1)} if $K(W)=\langle E \rangle \cong {\mathbb Z}_{n}$, $n \geq 2$, then either: 
(a) both fixed points of $E$ are separated by $W$ or (b) $n=2$ and $E$ has both fixed points on $W$ (so it permutes $\widehat{R}_{1}$ with $\widehat{R}_{2}$), and  {\rm (1.2)} if $K(W)\cong {\mathbb Z}_{2}^{2}$, then exactly one of the three elliptic elements of order two keeps invariant each $\widehat{R}_{j}$ and the others two permutes them (so they have their fixed points on $W$). 

 {\rm (2)} If $\widehat{R}$ is an structure region, then $K(\widehat{R})$ is either: (i) trivial, (ii) cyclic or (iii) isomorphic to ${\mathbb Z}_{2}^{2}$.
\end{lemm}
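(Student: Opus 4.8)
The plan is to exploit the two key facts already established just above the statement: that $\Theta: K \to H$ restricts to isomorphisms $K(W) \xrightarrow{\sim} H(L)$ and $K(\widehat R) \xrightarrow{\sim} H(R)$, and that $H$ (hence every subgroup) is abelian and finite. Since $K(W)$ and $K(\widehat R)$ embed in ${\rm PSL}_2(\mathbb C)$ as finite abelian groups, the classification of finite abelian subgroups of ${\rm PSL}_2(\mathbb C)$ — trivial, cyclic, or $\mathbb Z_2^2$ — gives part (2) immediately and the first sentence of part (1). So the substance of the lemma is the refinement (1.1)--(1.2) describing \emph{how} a nontrivial $K(W)$ sits relative to the loop $W$ and the two adjacent structure regions $\widehat R_1, \widehat R_2$.

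For (1.1): let $E$ generate $K(W) \cong \mathbb Z_n$. Since $E$ is a finite-order element of ${\rm PSL}_2(\mathbb C)$ fixing the simple loop $W$ setwise, $E$ is elliptic with two fixed points, say $p,q$, neither of which lies in the limit set (they lie in $\Omega$, being fixed by a finite-order element acting discontinuously). I would first observe that $E$ permutes the two sides of $W$, i.e. the two components of $\widehat{\mathbb C}\setminus W$; either it fixes both or swaps them. If it fixes both sides, then $W$ together with the two fixed points $p,q$ — one must argue $p$ and $q$ lie on opposite sides, since an orientation-preserving finite-order homeomorphism of a disc has a unique fixed point in the interior — gives case (a). If $E$ swaps the two sides, then $E$ must carry each fixed point to the other side, impossible unless $p,q \in W$; and an involution is the only nontrivial finite-order Möbius map that can swap the two complementary discs of a loop through its two fixed points, forcing $n=2$, which is case (b). The orbit picture then forces $E$ to permute $\widehat R_1$ and $\widehat R_2$, since these are the two structure regions meeting $W$ and $E(W)=W$.

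For (1.2): write $K(W) = \{I, E_1, E_2, E_3\} \cong \mathbb Z_2^2$ with $E_3 = E_1 E_2$; each $E_i$ is an involution with two fixed points, and each $E_i$ fixes $W$ setwise, so by the dichotomy from (1.1) applied to $\langle E_i\rangle$, each either separates its fixed points by $W$ (keeping both sides, hence both $\widehat R_j$, invariant) or has both fixed points on $W$ (swapping $\widehat R_1, \widehat R_2$). The action of $K(W)$ on the two-element set $\{\widehat R_1,\widehat R_2\}$ is a homomorphism $\mathbb Z_2^2 \to \mathbb Z_2$; its kernel has order two, so exactly one nontrivial element — say $E_1$ — keeps each $\widehat R_j$ invariant, and $E_2, E_3$ swap them, hence have their fixed points on $W$. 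This is exactly the assertion.

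The main obstacle I anticipate is the careful \emph{topological} step in (1.1): an orientation-preserving finite-order homeomorphism of a closed disc (here the closure in $\widehat{\mathbb C}$ of a component of $\widehat{\mathbb C}\setminus W$, which is a topological closed disc since $W$ is a simple loop) has exactly one fixed point in the interior, and an involution swapping two such discs glued along $W$ must fix exactly the two points where the common axis meets $W$. This can be made rigorous either by a Brouwer-type / Smith-theory fixed point count, or — since $E$ is actually a Möbius transformation — by conjugating $E$ to the standard rotation $z \mapsto e^{2\pi i/n} z$ and analyzing the possible positions of the invariant simple loop $W$ relative to $\{0,\infty\}$ directly; the conformal normalization makes everything explicit and avoids any delicate general topology. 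Everything else is bookkeeping with the isomorphism $\Theta|_{K(W)}$ and the finite-abelian-subgroup classification.
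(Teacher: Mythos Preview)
Your approach is the paper's: both rest on the classification of finite abelian subgroups of $\mathrm{PSL}_2(\mathbb{C})$, and in fact the paper supplies no argument for (1.1)--(1.2) beyond the sentence ``the following lemma follows,'' so your analysis of those refinements is already more detailed than the original.

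There is one small gap in your treatment of (1.2). You assert that the kernel of the action $K(W)\cong\mathbb{Z}_2^2 \to \mathrm{Sym}\{\widehat R_1,\widehat R_2\}\cong\mathbb{Z}_2$ has order two, but a priori the action could be trivial (kernel all of $\mathbb{Z}_2^2$), and you do not rule this out. The fix is immediate: in any Klein four subgroup of $\mathrm{PSL}_2(\mathbb{C})$, each involution interchanges the two fixed points of each of the other two (verify in the model $\langle z\mapsto -z,\ z\mapsto 1/z\rangle$). Hence if $E_1$ lies in the kernel, so that by your dichotomy from (1.1) its fixed points are separated by $W$, then $E_2$ swaps those two fixed points and therefore swaps the two sides of $W$; thus $E_2$ is not in the kernel and the action is nontrivial. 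With this one sentence added, your argument is complete.
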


\begin{rema}
Let $\widehat{R}$ be an structure region and $W$ be an structure loop on its boundary. Either the $K(\widehat{R})$-stabilizer of $W$ is trivial or a cyclic group. Let us assume this stabilizer to be a non-trivial group, say $K_{0}=\langle U \rangle \leq K(\widehat{R})$. Then $K_{0}$ is a maximal cyclic subgroup of $K(\widehat{R})$ and both fixed points of $U$ are separated by $W$. Either: (i) $K_{0}=K(W)$ or (ii) $K(W) =\langle U,V\rangle \cong {\mathbb Z}_{2}^{2}$, where $V$ has its both fixed points on $W$.
\end{rema} 

We also have the following fact.

\begin{lemm}\label{lema5.2}
Let $\widehat{R}$ be an structure region with $K(\widehat{R})=\langle U, V \rangle \cong {\mathbb Z}_{2}^{2}$. Then there is no $T \in K$ such that $TUT^{-1}=V$.
\end{lemm}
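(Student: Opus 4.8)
The plan is to argue by contradiction using the action on the structure region $\widehat{R}$ together with the fact that $K(\widehat R)$ arises as an isomorphic image (via $\Theta$) of the subgroup $H(R)<H$. Suppose there exists $T\in K$ with $TUT^{-1}=V$. First I would record the basic geometric picture from Lemma \ref{lema2}(1.2) and the remark following it: writing $K(\widehat R)=\langle U,V\rangle\cong{\mathbb Z}_2^2$, exactly one of the three involutions $U,V,UV$ keeps each boundary structure loop invariant while the other two swap the two adjacent structure regions; in particular, on each structure loop $W$ on $\partial\widehat R$ whose $K(\widehat R)$-stabilizer is nontrivial, we can read off which of $U,V,UV$ fixes the two complementary discs and which has its fixed points on $W$.

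The key step is to exploit normality of $G$ in $K$ and the homomorphism $\Theta:K\to H$. Since $H=K/G$ is abelian, conjugation in $K$ descends to the identity on $H$, so $\Theta(TUT^{-1})=\Theta(U)$; combined with $TUT^{-1}=V$ this forces $\Theta(U)=\Theta(V)$ in $H$, hence $\Theta(UV)=I$, i.e. $UV\in G$. But $UV$ is elliptic of order two and $G$ is torsion free (it is a Schottky group), a contradiction. So the abelianness of $K/G$ alone already kills the existence of such a $T$, provided one checks that $U$ and $V$ are genuinely distinct nontrivial elements of $K(\widehat R)\cong{\mathbb Z}_2^2$, which is part of the hypothesis.

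I would then double-check the one subtlety: one must make sure $T$ conjugating $U$ to $V$ cannot already lie in a situation where $U=V$ or where one of them is trivial — but that is excluded since $K(\widehat R)\cong{\mathbb Z}_2^2$ has three distinct involutions and $U,V$ are two of its standard generators. I expect the main (minor) obstacle to be purely expository: making explicit that the restriction $\Theta:K(\widehat R)\to H(R)$ being an isomorphism (already noted in the text, since $G$ is torsion free) means $U,V,UV$ map to three distinct nontrivial elements of $H$, so that $UV\notin G=\ker\Theta$; once this is spelled out, the contradiction with torsion-freeness of $G$ is immediate. Thus the proof reduces to the two lines: $TUT^{-1}=V \Rightarrow \Theta(U)=\Theta(V)\Rightarrow UV\in G$, impossible since $UV$ has order two and $G$ is torsion free.
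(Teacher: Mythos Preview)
Your argument is correct and is precisely the intended unpacking of the paper's one-line proof (``This follows from the fact that $K/G$ is an abelian group''): from $TUT^{-1}=V$ and abelianness of $H=K/G$ you get $\Theta(U)=\Theta(V)$, hence $UV\in G$, contradicting torsion-freeness of the Schottky group $G$. The geometric setup in your first paragraph is unnecessary for this lemma; the two-line argument in your second paragraph is the whole proof.
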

\begin{proof}
This follows from the fact that $K/G$ is an abelian group.
\end{proof}

Let $\widehat{R}$ be an structure region and $U \in K(\widehat{R})\setminus\{I\}$. 
The group $K$ is a geometrically finite Kleinian group, containing no parabolic transformations. In \cite{H:mefp} it was observed that either: (i) both fixed points of $U$ belong to $\Omega$, or (ii)  or there is a loxodromic transformation $A \in K$ such that $AU=UA$. In the next proposition  we observe that if $U$ has one of its fixed points in $\widehat{R}$, then the same holds for its other fixed point. 
 
\begin{prop}\label{fixedpoints}
Let $\widehat{R}$ be an structure region and $U \in K(\widehat{R}) \setminus \{I\}$. If one of the fixed points of $U$ belongs to $\widehat{R}$, then does the other fixed point.
\end{prop}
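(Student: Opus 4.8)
The plan is to argue inside the compactified hyperbolic $3$-space $\overline{{\mathbb H}^{3}}={\mathbb H}^{3}\cup\widehat{\mathbb C}$, on which $K$ acts as a geometrically finite group without parabolics, and to exploit the ball decomposition of $\overline{{\mathbb H}^{3}}$ coming from the Schottky uniformization (the lifts of the meridian discs of the handlebody $\overline{{\mathbb H}^{3}}/G$ cut $\overline{{\mathbb H}^{3}}$ into balls, each having an structure region as its ideal boundary).

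First I would reduce to the case in which \emph{both} fixed points of $U$ belong to $\Omega$. By the observation of \cite{H:mefp} recalled above, the only alternative is that there is a loxodromic $A\in K$ with $AU=UA$; but a loxodromic element and an elliptic element that commute must share their fixed point set, so the two fixed points of $U$ would be the fixed points of the loxodromic $A$, hence points of the limit set $\Lambda(K)\subset\widehat{\mathbb C}\setminus\Omega$ — contradicting the hypothesis that one of them lies in $\widehat{R}\subset\Omega$. So from now on ${\rm Fix}(U)=\{p,q\}\subset\Omega$, with $p\in\widehat{R}$, and $U$ has some order $n\geq 2$.

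Next I would build the $3$-ball $\widehat{B}$ attached to $\widehat{R}$: cap off each of the (at least three, by minimality of $\mathcal F$) boundary structure loops $W_{1},\ldots,W_{t}$ of $\widehat{R}$ by a properly embedded disc $\delta_{j}\subset{\mathbb H}^{3}$ with $\partial_{\infty}\delta_{j}=W_{j}$, chosen pairwise disjoint and $K(\widehat{R})$-equivariantly (possible since $K(\widehat{R})$ permutes the $W_{j}$). Then the region of $\overline{{\mathbb H}^{3}}$ bounded by $\widehat{R}$ together with the $\delta_{j}$ is a closed $3$-ball $\widehat{B}$, and $U(\widehat{B})=\widehat{B}$. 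Since $U$ is a rotation about the geodesic $\widetilde{\gamma}$ joining $p$ to $q$, one has ${\rm Fix}(U)\cap\overline{{\mathbb H}^{3}}=\overline{\widetilde{\gamma}}=\widetilde{\gamma}\cup\{p,q\}$; and since every non-trivial power of $U$ has the same axis, Smith theory for the finite cyclic action of $\langle U\rangle$ on $\widehat{B}\cong B^{3}$ gives that ${\rm Fix}(U)\cap\widehat{B}=\overline{\widetilde{\gamma}}\cap\widehat{B}$ is connected; hence it is a subarc of $\overline{\widetilde{\gamma}}$ containing $p$, meeting $\partial\widehat{B}$ exactly in its two endpoints, which are the two fixed points of the rotation $U|_{\partial\widehat{B}}$ of the $2$-sphere $\partial\widehat{B}$. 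If this subarc is all of $\overline{\widetilde{\gamma}}$, then $q\in\partial\widehat{B}$; as $q\in\widehat{\mathbb C}$ this forces $q\in\overline{\widehat{R}}=\widehat{R}\cup\bigcup_{j}W_{j}$, and $q$ cannot lie on a loop $W_{j}$, for then $U(W_{j})=W_{j}$ and $U|_{W_{j}}$ would be an orientation-preserving finite-order homeomorphism of a circle with a fixed point, hence the identity, making $U$ fix a Jordan curve pointwise, i.e. $U=I$. Thus $q\in\widehat{R}$.

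So everything reduces to showing that $\overline{\widetilde{\gamma}}$ never exits $\widehat{B}$ through one of the capping discs $\delta_{j}$, and \emph{this is the step I expect to be the main obstacle}. If it did exit through $\delta_{j}$, the exit point would be a fixed point of $U$ lying on $\delta_{j}$, whence $U(\delta_{j})=\delta_{j}$ and $U\in K(W_{j})$; as $U$ also fixes $\widehat{R}$ it does not interchange the two structure regions adjacent to $W_{j}$, so Lemma \ref{lema2}(1.1) forces $W_{j}$ to separate $p$ from $q$. Gluing on the adjacent ball across $\delta_{j}$ and repeating produces a chain $\widehat{R}=\widehat{R}^{(0)},\widehat{R}^{(1)},\ldots$ of structure regions, all stabilized by $U$ (because $U$ fixes $\widetilde{\gamma}$ pointwise it fixes each ball that $\widetilde{\gamma}$ traverses), along which $\overline{\widetilde{\gamma}}$ runs toward $q$. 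These regions project to \emph{distinct} subsurfaces of $S=\Omega/G$: if $\widehat{R}^{(b)}=g\,\widehat{R}^{(a)}$ with $g\in G\setminus\{I\}$, then $U$ and $g^{-1}Ug$ both lie in $K(\widehat{R}^{(a)})$ with the same image under the isomorphism $\Theta|_{K(\widehat{R}^{(a)})}$, so $U=g^{-1}Ug$, i.e. $U$ commutes with the purely loxodromic $g\in G$, again putting ${\rm Fix}(U)$ in $\Lambda(K)$ — impossible. Since $S\setminus\mathcal F$ has only finitely many components the chain is finite, and $\overline{\widetilde{\gamma}}$ reaches $q$ inside the last ball; the remaining — and most delicate — point is to show the chain has length zero. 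Here I would argue on $S$: such a chain of positive length would produce a loop of $\mathcal F$ fixed by $h=\Theta(U)$ whose meridian disc meets the $h$-invariant arc which is the image of $\overline{\widetilde{\gamma}}$ in the interior, and I would rule this out using the minimality of $\mathcal F$ together with the distinctness of the pieces $R^{(i)}=P(\widehat{R}^{(i)})$ just established, concluding $\overline{\widetilde{\gamma}}\subset\widehat{B}$ and hence $q\in\widehat{R}$.
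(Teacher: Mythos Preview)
Your approach is genuinely different from the paper's and the setup is sound, but the argument is incomplete at exactly the point you flag as ``the remaining --- and most delicate --- point.''

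The paper never goes to ${\mathbb H}^{3}$ at all. It first disposes of the case $K(\widehat{R})=\langle U,V\rangle\cong{\mathbb Z}_{2}^{2}$ in one line: the involution $V\in K(\widehat{R})$ permutes the two fixed points of $U$, so if one lies in $\widehat{R}$ then so does the other. This shortcut is invisible in your uniform $3$--dimensional treatment. For the remaining case $K(\widehat{R})=\langle U\rangle\cong{\mathbb Z}_{n}$, the paper does essentially what you are reaching for at the end, but carries it out explicitly: if exactly one fixed point of $U$ lies in $\widehat{R}$, then there is a \emph{unique} boundary structure loop $W$ of $\widehat{R}$ stabilized by $U$; letting $\widehat{R}_{1}$ be the adjacent region, the paper shows $P(\widehat{R})\neq P(\widehat{R}_{1})$ (your non-$G$-equivalence argument is exactly this), and then runs a case analysis on $K(\widehat{R}_{1})$. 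If $K(\widehat{R}_{1})=\langle U\rangle$, one checks that $W$ is not $K$-equivalent to any other boundary loop of $\widehat{R}\cup W\cup \widehat{R}_{1}$, so the $H$-orbit of $P(W)$ can be deleted from ${\mathcal F}$, contradicting minimality. If $K(\widehat{R}_{1})=\langle U,V\rangle\cong{\mathbb Z}_{2}^{2}$ (forcing $n=2$), one must enlarge the patch to $\widehat{R}\cup W\cup \widehat{R}_{1}\cup V(W)\cup V(\widehat{R})$ and delete \emph{two} $H$-orbits, namely those of $P(W)$ and $P(V(W))$; Lemma~\ref{lema5.2} is used here to rule out $K$-equivalences among boundary loops with nontrivial stabilizer.

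Your sketch ``rule this out using minimality of ${\mathcal F}$ together with the distinctness of the pieces $R^{(i)}$'' does not yet contain this argument. Distinctness of the $R^{(i)}$ is necessary but not sufficient: you still have to verify that after deleting the $H$-orbit of $P(W)$ (and possibly of one more loop) the remaining family is still $H$-invariant, still cuts $S$ into planar pieces, and still defines the same Schottky covering. The paper's check that $W$ is not $K$-equivalent to any other boundary loop of the enlarged patch is precisely what guarantees this, and the ${\mathbb Z}_{2}^{2}$ sub-case shows that sometimes one loop is not enough. Your chain construction is correct but longer than needed: the contradiction already fires at the first crossed disc, so you never need to follow $\overline{\widetilde{\gamma}}$ beyond $\widehat{R}^{(1)}$. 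If you fill in the $K$-equivalence check at that first step (with the two sub-cases above), your proof will be complete and will give a pleasant $3$-dimensional reinterpretation of the paper's planar argument.
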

\begin{proof}
Without loss of generality, me may assume that either: 
(1) $K(\widehat{R})=\langle U \rangle \cong {\mathbb Z}_{n}$, where $n \geq 2$, or 
(2) $K(\widehat{R})=\langle U,V \rangle \cong {\mathbb Z}_{2}^{2}$.
If we are in case (2), then the involution $V$ permutes both fixed points of $U$, and we are done. 

Let us now consider the case (1). Let us assume, by the contrary, that $U$ has exactly one of its fixed points on $\widehat{R}$. Then there is one (and only one) structure loop $W$, on the boundary of $\widehat{R}$, which is stabilized by $U$. Moreover, any of the other boundary loops has trivial $K(\widehat{R})$-stabilizers. This, in particular, asserts that $P(W) \in {\mathcal F}$ cannot be $H$-equivalent to any of the other boundary loops of $P(\widehat{R})$.

Let $\widehat{R}_{1}$ be the other structure region with $W$ on its boundary. 
These two structure regions cannot be equivalent under $G$. In fact, if there is some $T \in G\setminus \{I\}$ such that $T(\widehat{R}_{1})=\widehat{R}$, then (as $\widehat{R}_{1}$ is stabilized by $U$) $T(W)$ must be a boundary loop of $\widehat{R}$ which has nontrivial  $K(\widehat{R})$-stabilizer. So, as previously noted, $T(W)=W$, which means that $T \in K(W)$, a contradiction as $T$ has infinite order and $K(W)$ is finite. Now, this asserts that $P(\widehat{R})$ and $P(\widehat{R}_{1})$ are different and that $P({\widehat R} \cup W \cup {\widehat R}_{1})$ is also planar.

(1.1) Assume that $K(\widehat{R}_{1})=\langle U \rangle$.

If the other fixed point of $U$ belongs to $\widehat{R}_{1}$, then (as its other boundary loops have trivial $K(\widehat{R})$-stabilizer) $W$ cannot be $K$-invariant to any other boundary loop of $\widehat{R}_{1}$.
If $\widehat{R}_{1}$ has not a fixed point of $U$, then it has a boundary loop $W_{1} \neq W$ which is invariant under $U$. Assume there is some $T \in K$ such that $T(W_{1})=W$. Then $T(W)$ is a boundary loop of $\widehat{R}$, different from $W$ which is invariant under $U$, a contradiction. 
So, in this situation neither the loop $W$ can be $K$-equivalent to any other boundary loop of $\widehat{R}_{1}$.

We have proved that the none of the boundary loops of $\widehat{R}_{1} \cup W \cup \widehat{R}$ is $K$-equivalent to $W$. So, we may delete the $H$-translates of $P(W)$ from ${\mathcal F}$ and still having a Schottky system of loops for $H$, and we get a contradiction to the minimality of ${\mathcal F}$.

(1.2) Assume that $n=2$ and $K(\widehat{R}_{1})=\langle U,V\rangle \cong {\mathbb Z}_{2}$.

 In this case, $W_{1}=V(W)$ is a boundary loop of $\widehat{R}_{1}$ also invariant under $U$. The region
 $\widehat{R}_{2}:=V(\widehat{R})$ is the other structural region sharing $W_{1}$ in its boundary. 
 
We may note that $W$ (and so $W_{1}$) are non-$K$-equivalent to any boundary loop of the region $X:=\widehat{R} \cup W \cup \widehat{R}_{1} \cup W_{1} \cup \widehat{R}_{2}$. This is clear for those boundary loops of $X$ which are also boundary loops of either $\widehat{R}$ and $\widehat{R}_{2}$ (if there is some $T \in K$ such that $T(W)$ is a boundary loop of either $\widehat{R}$ or $\widehat{R}_{2}$, different from $W$ and $W_{1}$, then $TUT^{-1}$ will be stabilizing $T(W)$ and also the corresponding region, a contradiction). The same holds for those boundary loops of $\widehat{R}_{1}$ with trivial $K(\widehat{R}_{1}$-stabilizer. Lemma \ref{lema5.2} takes care of the other boundary loops.

So, we may delete the $H$-translates of $\{P(W),P(W_{1})\}$ from ${\mathcal F}$ and still having a Schottky system of loops for $H$, and we get a contradiction to the minimality of ${\mathcal F}$.
\end{proof}

In the above proposition we have seen that if $U \in K(\widehat{R}) \setminus \{I\}$ has one fixed point on $\widehat{R}$, then the other fixed point also does. In the next proposition we consider the case when both fixed points of $U$ do no belong to such a region.

\begin{prop}\label{propo3}
Let $\widehat{R}$ be an structure region with either 
{\rm (1)} $K(\widehat{R})=\langle U \rangle \cong {\mathbb Z}_{n}$, where $n \geq 2$, or {\rm (2)} $K(\widehat{R})=\langle U,V \rangle \cong {\mathbb Z}_{2}^{2}$. If 
none of the two fixed points of $U$ are in $\widehat R$, then there are two different boundary loops of $\widehat R$, say $W_{1}$ and $W_{2}$, each one invariant under $U$. In case {\rm (1)} there is a loxodromic transformation $L \in K$ commuting with $U$ and such that $L(W_{1})=W_{2}$. Moreover, in this case, $K(W_{j})=\langle U\rangle$.
In case {\rm (2)} either: (i) there is a loxodromic transformation $L \in K$ commuting with $U$ and such that $L(W_{1})=W_{2}$, in which case, $K(W_{j})\cong {\mathbb Z}_{2}^{2}$,
or (ii) $K(W_{j})=\langle U\rangle$, the $K$-stabilizer of the other structural region sharing $W_{j}$ in its boundary is isomorphic to ${\mathbb Z}_{2}^{2}$ and both stabilizers intersect exactly on $K(W_{j})$.

\end{prop}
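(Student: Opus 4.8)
The plan is to analyze the combinatorics of the boundary loops of $\widehat{R}$ under the action of $U$, and then feed the minimality of $\mathcal{F}$ into the picture exactly as in the proof of Proposition \ref{fixedpoints}. First I would record the elementary geometric fact: if $U$ is elliptic of order $n \geq 2$ and neither of its two fixed points lies in $\widehat{R}$, then each fixed point lies in some structure region $\neq \widehat{R}$, and since $U$ stabilizes $\widehat{R}$, the loops of $\partial\widehat{R}$ separating $\widehat{R}$ from those two fixed points must be permuted among themselves by $U$; because each such separating loop is itself the boundary of a disc containing exactly one fixed point of $U$, there are precisely two boundary loops $W_{1}, W_{2}$ of $\widehat{R}$ invariant under $U$ (one ``facing'' each fixed point), and all other boundary loops of $\widehat{R}$ have trivial $K(\widehat{R})$-stabilizer. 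In case (2) the same holds for $U$ itself (the involution $V$ then either swaps $W_{1}$ and $W_{2}$ or fixes each, depending on where its fixed points lie — this is the dichotomy that will produce sub-cases (i) and (ii)).

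Next I would identify how $W_{1}$ and $W_{2}$ are matched. Since $\mathcal{F}$ is minimal, $P(W_{1})$ cannot be deleted together with its $H$-orbit, so $W_{1}$ (equivalently $P(W_{1})$) must be $K$-equivalent to some other structure loop; I claim it must be $K$-equivalent to $W_{2}$. Indeed, suppose $T \in K$ carries $W_{1}$ to a boundary loop $W'$ of $\widehat{R}$; then $T\langle U\rangle T^{-1}$ stabilizes $W'$, so $W'$ has non-trivial $K(\widehat{R})$-stabilizer after conjugation — but the only boundary loops of $\widehat{R}$ with a non-trivial stabilizer of their own region are $W_{1}$ and $W_{2}$, so $W' \in \{W_{1},W_{2}\}$. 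Ruling out $W' = W_{1}$ with $T \notin K(W_{1})$ proceeds as in Proposition \ref{fixedpoints}: otherwise one could reduce $\mathcal{F}$. Hence some $T \in K$ with $T(W_{1}) = W_{2}$ exists. Now $TUT^{-1}$ stabilizes $W_{2}$; since $U$ and $TUT^{-1}$ are both in the (finite cyclic, in case (1)) $K(W_{2})$-stabilizer induced from the two regions adjacent to $W_{2}$, and $K/G$ is abelian (Lemma \ref{lema5.2} in case (2) prevents $U$ from being conjugate to $V$), one gets $TUT^{-1} = U$, i.e. $T$ commutes with $U$. Writing $L := T$ — or, if $T$ turns out to be elliptic, composing $T$ with a suitable power of $U$ and using the structure of finite subgroups of $\mathbb{M}$ to force loxodromicity — gives the required loxodromic $L$ with $L(W_{1}) = W_{2}$ and $LU = UL$; the equality $K(W_{j}) = \langle U\rangle$ in case (1) follows because $K(W_{j})$ is abelian, contains $U$, and any extra element would either be loxodromic (impossible in a finite stabilizer) or produce a $\mathbb{Z}_{2}^{2}$, contradicting that both adjacent regions here are $\mathbb{Z}_{n}$-stabilized.

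For case (2) I would split according to whether the loxodromic matching above exists: if it does, then the conjugation argument upgrades to show $TK(W_{1})T^{-1} = K(W_{2})$, and since $K(W_{j})$ contains $U$ and lives between two regions at least one of which has $\mathbb{Z}_{2}^{2}$-stabilizer, one checks $K(W_{j}) \cong \mathbb{Z}_{2}^{2}$, giving alternative (i). If no loxodromic $T$ pairs $W_{1}$ with $W_{2}$, then the pairing of $W_{1}$ must instead come from inside the $\mathbb{Z}_{2}^{2}$: the involution $V$ (or $UV$) swaps $W_{1}$ and $W_{2}$, the other structure region $\widehat{R}'$ sharing $W_{1}$ carries a $\mathbb{Z}_{2}^{2}$-stabilizer (it is forced, because $W_{1}$ cannot be $K$-equivalent to anything outside $\widehat{R}\cup\widehat{R}'$ without contradicting minimality exactly as above), $K(W_{1}) = \langle U\rangle$, and $K(\widehat{R}) \cap K(\widehat{R}') = K(W_{1})$ because their intersection stabilizes $W_{1}$ and is abelian of order $2$. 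That is alternative (ii).

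The main obstacle I anticipate is the step forcing $TUT^{-1} = U$ and, more delicately, arranging that the pairing transformation can be taken loxodromic in case (1): one must rule out the possibility that the natural element pairing $W_{1}$ to $W_{2}$ is elliptic or that $K(W_{j})$ is strictly larger than $\langle U\rangle$, and this requires combining the abelianity of $K/G$, the classification of finite subgroups of $\mathrm{PSL}_{2}(\mathbb{C})$, and the ``either both fixed points in $\Omega$ or there is a commuting loxodromic'' dichotomy from \cite{H:mefp} together with Proposition \ref{fixedpoints} to locate the fixed points of $U$ relative to the chain of regions. The bookkeeping of which boundary loops of $\widehat{R}$, $\widehat{R}'$ (and their $V$-translates in case (2)) are mutually $K$-inequivalent — needed to invoke minimality cleanly — is routine but must be done carefully to avoid circularity.
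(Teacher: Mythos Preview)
Your overall strategy coincides with the paper's (minimality of $\mathcal{F}$ forces a pairing $W_{1} \to W_{2}$, then algebra gives commutation with $U$), and your commutation step is actually cleaner than the paper's: from $K/G$ abelian one has $TUT^{-1}U^{-1} \in [K,K] \leq G$, while $TUT^{-1}, U \in K(W_{2})$ gives $TUT^{-1}U^{-1} \in K(W_{2})$, and $K(W_{2}) \cap G = \{1\}$ (finite meets torsion-free trivially) yields $TUT^{-1}=U$ at once. The paper instead shows $L$ normalizes $\langle U\rangle$, hence permutes its fixed points, and uses that a loxodromic cannot swap two points.

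The genuine gap is in how you invoke minimality in case (1). You only test $K$-equivalence of $W_{1}$ against boundary loops of $\widehat{R}$, but that is not what removing the $H$-orbit of $P(W_{1})$ requires. To derive a contradiction from minimality you must pass to the adjacent region $\widehat{R}_{1}$: first verify that $\widehat{R}$ and $\widehat{R}_{1}$ are not $G$-equivalent (so that $P(\widehat{R}\cup W_{1}\cup\widehat{R}_{1})$ remains planar), and then check $W_{1}$ against \emph{all} boundary loops of $\widehat{R}\cup W_{1}\cup\widehat{R}_{1}$, including those of $\widehat{R}_{1}$. The paper does this via a case-split on $K(\widehat{R}_{1})$; when $n=2$ and $K(\widehat{R}_{1})\cong\mathbb{Z}_{2}^{2}$ one must even enlarge to a three-region strip $X=\widehat{R}\cup W_{1}\cup\widehat{R}_{1}\cup W_{3}\cup V(\widehat{R})$ and delete two loops simultaneously. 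This is the substance of the argument, not peripheral bookkeeping. Your case (2) dichotomy is likewise misaligned: since $V$ swaps the fixed points of $U$ it \emph{always} swaps $W_{1}$ and $W_{2}$; the real split is whether $K(W_{1})$ contains an extra involution $E$ with both fixed points on $W_{1}$. If so, $L:=VE$ is the explicit loxodromic (alternative (i)); if not, $K(W_{1})=\langle U\rangle$ and one applies the already-proved case (1) to $\widehat{R}_{1}$ to exclude $K(\widehat{R}_{1})=\langle U\rangle$, forcing $K(\widehat{R}_{1})\cong\mathbb{Z}_{2}^{2}$ (alternative (ii)). Your sketch produces neither the explicit $L=VE$ nor this reduction.
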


\begin{proof}
As both fixed points of $U$ are not contained on $\widehat R$, it follows that there are two different boundary structure loops on it, say $W_{1}$ and $W_{2}$, each one invariant under $U$. Recall that $\widehat R$ must contains at least three boundary loops (by the minimality of the collection ${\mathcal F}$).

(1) Let $K(\widehat{R})=\langle U \rangle \cong {\mathbb Z}_{n}$, where $n \geq 2$. By Lemma \ref{lema2}, the $K(\widehat{R})$-stabilizer of each $W_{1}$ and $W_{2}$ is $\langle U\rangle$ and the other boundary structure loops of $\widehat{R}$ have trivial $K(\widehat{R})$-stabilizer. 

We need to prove that there is a loxodromic transformation $L \in K$ such that $L(W_{1})=W_{2}$. Note that, as there is no element of  $K(\widehat{R})$ sending $W_{1}$ to $W_{2}$, if we obtain some $L \in K$ such that $L(W_{1})=W_{2}$, then $L$ cannot have finite order, so it must be a loxodromic transformation.

Let us assume, by the contrary, that there is no element of $K$ sending $W_{1}$ to $W_{2}$. This assumption ensures that $P(W)$ cannot be $H$-equivalent to the other boundary loops of $P(\widehat{R})$. Now, we may proceed similarly as in the proof of proposition \ref{fixedpoints}.

Let $\widehat{R}_{1}$ be the other structure region containing $W_{1}$ as a boundary loop. 
Then $\widehat{R}$ and $\widehat{R}_{1}$ are non-equivalent under $G$. To see this, assume there is some $A \in G\setminus\{I\}$ such that $A(\widehat{R}_{1})=\widehat{R}$. Then $A(W_{1})$ is a boundary structure loop of $\widehat{R}$ which is stabilized by $A U A^{-1}$, which also stabilizes $\widehat{R}$ (as $U$ stabilizes $\widehat{R}_{1}$).
The only possibilities are $A(W_{1}) \in \{W_{1},W_{2}\}$. By our assumption, we only may have $A(W_{1})=W_{1}$, that is, $A \in K(W_{1})$. This is a contradiction as $A$ has infinite order and $K(W_{1})$ is finite. Now, this asserts that $P({\widehat R})$ and $P({\widehat R}_{1})$ are different and that $P({\widehat R} \cup W_{1} \cup {\widehat R}_{1})$  is still planar.

In the structural region $\widehat{R}_{1}$ there is another boundary loop $W_{3} \neq W_{1}$ which is invariant under $U$. 

Assume that $K(\widehat{R}_{1})=\langle U \rangle$.
If there is some $T \in K$ such that $T(W_{3})=W_{1}$, then $T$ should be loxodromic and $T(W_{1})=W_{2}$, a contradiction to our assumption. Similarly as in the proof of Proposition \ref{fixedpoints}, we see that the loop $W_{1}$ is non-$K$-equivalent to any other boundary loop of ${\widehat R} \cup W_{1} \cup {\widehat R}_{1}$. So, 
we may delete the $H$-translates of $P(W)$ from ${\mathcal F}$ to still having a Schottky system of loops for $H$, a contradiction to the minimality of ${\mathcal F}$.

Assume that $K(\widehat{R}_{1})=\langle U,V \rangle \cong {\mathbb Z}_{2}^{2}$, so $n=2$. In this case, $V(W_{1})=W_{3}$. We let $\widehat{R}_{2}=V(\widehat{R})$ and consider the region $X:=\widehat{R} \cup W_{1} \cup \widehat{R}_{1} \cup W_{3} \cup \widehat{R}_{2}$. We need to observe that the loops $W_{1}$ and $W_{3}$ cannot be $K$-equivalent to any other loops in the boundary of $X$. In fact, if there is some $T \in K$ such that $T(W_{1})=V(W_{3})$, then $VT(W_{1})=W_{2}$, a contradiction. If there is $T \in K$ such that $T(W_{1})$ is a boundary loop of $X$, also in the boundary of $\widehat{R}_{1}$, then $T(\widehat{R})=\widehat{R}_{1}$, a contradiction as these two regions have different $K$-stabilizers. So, we may  delete the $H$-translates of $\{P(W_{1}),P(W_{3})\}$ from ${\mathcal F}$ to still having a Schottky system of loops for $H$, a contradiction to the minimality of ${\mathcal F}$.  

Now, all the above asserts the existence of the loxodromic element $L \in K$ such that $L(W_{1})=W_{2}$. This ensures that $L$ 
conjugates the $K$-stabilizer of $W_{1}$ onto the $K$-stabilizer of $W_{2}$, that is, $L$ normalizes $\langle U\rangle$. It follows that $L$ and $U$ must have the same fixed points (so they commute).

Note that $K(W_{j}) \cap K(\widehat{R})=\langle U\rangle$.
By Lemma \ref{lema2}, if $K(W_{j}) \neq \langle U \rangle$, then $n=2$ and $K(W_{j})=\langle U, V\rangle \cong {\mathbb Z}_{2}^{2}$, where both fixed points of $V$ are on $W_{j}$. But in this case, it is possible to observe that $LV \in K(\widehat{R}) \setminus \langle U \rangle$, a contradiction.

(2) Let $K(\widehat{R})=\langle U,V \rangle \cong {\mathbb Z}_{2}^{2}$. If $W_{1}$ is invariant under an elliptic transformation of order two $E \in K$, with both fixed points on it, then $L=VE$ is a loxodromic transformation such that $L(W_{1})=W_{2}$ (and commuting with $U$). Similarly, if we replace $W_{1}$ by $W_{2}$. 

Let us assume that none of $W_{1}$ and $W_{2}$ is invariant under under such types of involutions in $K$ and that 
there is not a loxodromic transformation $L$ such that $L(W_{1})=W_{2}$. If $\widehat{R}_{1}$ is the other structural region sharing $W_{1}$ in its boundary, then its $K$-stabilizer contains $U$. Either $K(\widehat{R}_{1})=\langle U \rangle$ or $K(\widehat{R}_{1})=\langle U,F \rangle \cong {\mathbb Z}_{2}^{2}$.  The cyclic situation cannot happen by (1) above. In the second situation, the loxodromic transformation $VF$ sends the boundary loop $F(W_{1})$ to $W_{2}$.

\end{proof}

\begin{coro}\label{coro4}
Let $W \in \widehat{\mathcal F}$ be such that $K(W) \neq \{I\}$ and let $\widehat{R}_{1}$ and $\widehat{R}_{2}$ be the two structural regions sharing $W$ in their boundaries. Either (i) these two regions are $K$-equivalent or (ii) $K(\widehat{R}_{1}) \cong {\mathbb Z}_{2}^{2}$ and $K(W)=K(\widehat{R}_{1}) \cap K(\widehat{R}_{1})=\langle U \rangle \cong {\mathbb Z}_{2}$.
\end{coro}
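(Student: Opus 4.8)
The plan is to split into cases according to the isomorphism type of $K(W)$, which by Lemma~\ref{lema2}(1) is cyclic or isomorphic to ${\mathbb Z}_{2}^{2}$, and to feed each case into Propositions~\ref{fixedpoints} and~\ref{propo3}. (In alternative (ii) the intersection is meant to be $K(\widehat{R}_{1})\cap K(\widehat{R}_{2})$.) If $K(W)\cong{\mathbb Z}_{2}^{2}$, then by Lemma~\ref{lema2}(1.2) two of its three involutions interchange $\widehat{R}_{1}$ and $\widehat{R}_{2}$, so these regions are $K$-equivalent and we are in case (i). If $K(W)=\langle E\rangle\cong{\mathbb Z}_{2}$ with both fixed points of $E$ on $W$, then by Lemma~\ref{lema2}(1.1) $E$ interchanges $\widehat{R}_{1}$ and $\widehat{R}_{2}$, and we are again in case (i). By Lemma~\ref{lema2}(1) the only remaining possibility is $K(W)=\langle E\rangle\cong{\mathbb Z}_{n}$, $n\geq 2$, with the two fixed points of $E$ separated by $W$; then each non-trivial power of $E$ has a fixed point in each of the two discs bounded by $W$, hence preserves those discs and therefore preserves both $\widehat{R}_{1}$ and $\widehat{R}_{2}$. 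This is the only case left to treat.

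In that case, since $E\in K(\widehat{R}_{1})\setminus\{I\}$, Proposition~\ref{fixedpoints} gives that the two fixed points of $E$ either both lie in $\widehat{R}_{1}$ or both lie outside it; but $\widehat{R}_{1}$ is contained in one of the two discs bounded by $W$, while the fixed points of $E$ lie in the two different such discs, so neither can lie in $\widehat{R}_{1}$. By Lemma~\ref{lema2}(2), $K(\widehat{R}_{1})$ is therefore a non-trivial group which is cyclic or isomorphic to ${\mathbb Z}_{2}^{2}$, and I would treat these two subcases separately.

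If $K(\widehat{R}_{1})$ is cyclic, then the Remark following Lemma~\ref{lema2}, applied to $\widehat{R}_{1}$ and the boundary loop $W$ (whose $K(\widehat{R}_{1})$-stabilizer is non-trivial, since it contains $E$), forces $K(\widehat{R}_{1})=K(W)=\langle E\rangle$. Now apply Proposition~\ref{propo3}(1) to $\widehat{R}_{1}$ with $U=E$, which is legitimate because the fixed points of $E$ lie outside $\widehat{R}_{1}$: it locates the two $E$-invariant boundary loops of $\widehat{R}_{1}$, one of which is $W$, and produces a loxodromic $L\in K$ carrying $W$ to the other one. Since $L$ has infinite order it does not lie in the finite group $K(\widehat{R}_{1})$, so $L^{-1}(\widehat{R}_{1})\neq\widehat{R}_{1}$; but $L^{-1}(\widehat{R}_{1})$ has $W$ on its boundary, hence must equal $\widehat{R}_{2}$, so $\widehat{R}_{1}=L(\widehat{R}_{2})$ and we are in case (i).

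If instead $K(\widehat{R}_{1})\cong{\mathbb Z}_{2}^{2}$, then $E$ has order two and $K(W)=\langle E\rangle\cong{\mathbb Z}_{2}$. Apply Proposition~\ref{propo3}(2) to $\widehat{R}_{1}$ with $U=E$ (its fixed points being outside $\widehat{R}_{1}$); then $W$ is one of the two $E$-invariant boundary loops of $\widehat{R}_{1}$, and alternative (i) of that proposition would force $K(W)\cong{\mathbb Z}_{2}^{2}$, contradicting $K(W)\cong{\mathbb Z}_{2}$. Hence alternative (ii) holds, and it asserts precisely that $K(W)=\langle E\rangle$, that the $K$-stabilizer of the region meeting $W$ other than $\widehat{R}_{1}$ — namely $\widehat{R}_{2}$ — is isomorphic to ${\mathbb Z}_{2}^{2}$, and that $K(\widehat{R}_{1})\cap K(\widehat{R}_{2})=K(W)\cong{\mathbb Z}_{2}$. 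This is case (ii), completing the argument. The only genuinely delicate steps are the localization of the fixed points of $E$ via Proposition~\ref{fixedpoints} and the identification $L^{-1}(\widehat{R}_{1})=\widehat{R}_{2}$; the rest is bookkeeping with the earlier results.
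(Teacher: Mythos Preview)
Your proof is correct and follows exactly the route the paper intends: the corollary is stated without proof, as an immediate consequence of Lemma~\ref{lema2} and Propositions~\ref{fixedpoints} and~\ref{propo3}, and you have faithfully unpacked that dependence. The case split on the type of $K(W)$ via Lemma~\ref{lema2}(1), the use of Proposition~\ref{fixedpoints} to force both fixed points of $E$ outside $\widehat{R}_{1}$, and the subsequent appeal to the two parts of Proposition~\ref{propo3} are precisely what the label ``Corollary'' is pointing to; your identification $L^{-1}(\widehat{R}_{1})=\widehat{R}_{2}$ in the cyclic subcase and your elimination of alternative (i) of Proposition~\ref{propo3}(2) via the order of $K(W)$ are the right moves.
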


\begin{rema}\label{observa1}
(I) If $K(\widehat{R})=\langle U \rangle \cong {\mathbb Z}_{n}$, where the two fixed points of $U$ are not in $\widehat{R}$, then part (1) of Theorem \ref{propo3} asserts the existence of two structural loops $W_{1}, W_{2}$ in its boundary, each one invariant under $U$, and a loxodromic element $L \in G$ such that $L(W_{1})=W_{2}$. In this case, the same theorem asserts that 
$K(W_{j})=\langle U\rangle$ and $\langle U,L\rangle$ is a group of type (T4). 
(II) If $K(\widehat{R})=\langle U,V\rangle \cong {\mathbb Z}_{2}^{2}$, then either both fixed points of $C \in \{U,V,UV\}$ belong to $\widehat{R}$ or there are two boundary loops $W_{1,C}, W_{2,C}$ of $\widehat{R}$, each one invariant under $C$. One of the possibilities is that there is a loxodromic element $L_{C} \in K$, commuting with $C$ such that $L_{C}(W_{1,C})=W_{2,C}$, in which case $V_{C}=DL_{C}$ (where $C\neq D \in \{U,V,UV\}$) has order two, stabilizes $W_{1,C}$ and the group generated by $K(\widehat{R})$ and these loxodromic transformations (if any) provides the basic virtual Schottky groups of either types (T3), (T5), (T6) or (T7).  In the other possibility, if $\widehat{R}_{1}$ is the other structural region sharing $W_{1,C}$, then this structural region also has $K$-stabilizer isomorphic to ${\mathbb Z}_{2}^{2}$,
$K(\widehat{R}) \cap K(\widehat{R}_{1})=K(W_{1,C})$ and 
$\langle K(\widehat{R}), K(\widehat{R}_{1})\rangle =K(\widehat{R})*_{K(W_{1,C})} K(\widehat{R}_{1})$. This produces basic virtual Schottky groups of type (B3).

\end{rema}

\subsubsection{A choice of a maximal region}
We may proceed similarly as done in \cite{Hidalgo:Cyclic}.
As $S/H=\Omega/K$ is connected, we may find a maximal finite collection of non-$K$-equivalent structural regions $\widehat{R}_{1},\ldots, \widehat{R}_{n}$ such that, if we denote by $\overline{\widehat{R}}_{j}$ the clousure of $\widehat{R}_{j}$ (this is just adding to it its boundary structure loops), then 
$\widetilde{R}=\overline{\widehat{R}}_{1} \cup \cdots \cup \overline{\widehat{R}}_{n}$ is connected. Let $K_{j}=K(\widehat{R_{j}})$ be the $K$-stabilizer of $\widehat{R}_{j}$. 

If $W \in \widehat{\mathcal F}$ is contained in the interior of $\widetilde{R}$, then Corollary \ref{coro4} asserts that either (i) $K(W)=\{I\}$ or (ii) $K(W)$ coincides with the intersection of the $K$-stabilizers of both structural regions sharing it on the boundary (in this case, these two $K$stabilizers are isomorphic to ${\mathbb Z}_{2}^{2}$ and the group generated by them happens to be a free product amalgamated over $K(W)$). Those structural loops contained on the border of $\widetilde{R}$ may either as trivial $K$-stabilizer or to be a cyclic group of order two generated by an elliptic transformation with both fixed points on it.

Next, for each $j=1,\ldots, n$, we proceed to set some subgroups $K_{j}^{*}<K$ which are extensions of $K_{j}$ (and are as the ones described in the theorem). 

(1) If either $K_{j}$ is trivial or every of it non-trivial element have both fixed points on $\widehat{R}_{j}$, then we set $K_{j}^{*}=K_{k}$ (these are basic virtual Schottky groups of type (T1) and (T3)).

(2) If $K_{j}=\langle U_{j} \rangle \cong {\mathbb Z}_{n_{j}}$, $n_{j} \geq 2$, such that both fixed points of $U_{j}$ do not belong to $\widehat{R}_{j}$, then (by Proposition \ref{propo3}) there there are two structural boundary loops $W_{1}$ and $W_{2}$ of $\widehat{R}_{j}$, each one invariant under $U_{j}$, and there is a loxodromic transformation $A_{j} \in K$ commuting with $U_{j}$ such that $A_{j}(W_{1})=W_{2}$. In this case, by Klein-Maskit's combination theorem, $K_{j}^{*}=\langle K_{j}, A_{j}\rangle=K_{j}*_{\langle A_{j} \rangle} \cong {\mathbb Z}_{n_{j}}*_{\mathbb Z}$ (a basic virtual Schottky group of type (T4)).

(3) Let us assume $K_{j}=\langle U_{j}, V_{j} \rangle \cong {\mathbb Z}_{2}^{2}$ and both fixed points of at least one of its order two elements has not its fixed points on $\widehat{R}_{j}$. 
(3.1) If both fixed points of $U_{j}$ do not belong to $\widehat{R}_{j}$, but the fixed points of $V_{j}$ and $U_{j}V_{j}$ do, then
 there there are two structural boundary loops $W_{1}$ and $W_{2}$ of $\widehat{R}_{j}$, each one invariant under $U_{j}$.
 (By Proposition \ref{propo3}), one possibility is that there is a loxodromic transformation $A_{j} \in K$, commuting with $U_{j}$ and $A_{j}(W_{1})=W_{2}$. In this case, by Klein-Maskit's combination theorem, 
$K_{j}^{*}=\langle K_{j}, A_{j} \rangle=\langle K_{j} \rangle *_{ \langle A_{j} \rangle} \cong {\mathbb Z}^{2}_{2}*_{\mathbb Z}$ (a basic virtual Schottky group of type (T5)).
(3.2) If both fixed points of $U_{j}$ and  of $V_{j}$ do not belong to $\widehat{R}_{j}$, but the fixed points of $U_{j}V_{j}$ do, then one possibility (by Proposition \ref{propo3}) is that there there are structural boundary loops $W_{1}$, $W_{2}$, $W_{3}$ and $W_{4}$ of $\widehat{R}_{j}$, each $W_{1}$ and $W_{2}$ (respectively, $W_{3}$ and $W_{4}$) invariant under $U_{j}$ (respectively, $V_{j}$), and there are loxodromic transformation $A_{j}, B_{j} \in K$, with $A_{j}$ commuting with $U_{j}$ (respectively, $B_{j}$ commuting with $V_{j}$), $A_{j}(W_{1})=W_{2}$ and $B_{j}(W_{3})=W_{4}$. In this case, by Klein-Maskit's combination theorem, 
$K_{j}^{*}=\langle K_{j}, A_{j}, B_{j} \rangle=(\langle K_{j} \rangle *_{\langle A_{j} \rangle}) *_{\langle B_{j} \rangle}\cong ({\mathbb Z}^{2}_{2}*_{\mathbb Z})*_{\mathbb Z}$ (a basic virtual Schottky group of type (T6)).
(3.3) If both fixed points of $U_{j}$, $V_{j}$ and $U_{j}V_{j}$ do not belong to $\widehat{R}_{j}$,  then one possibility (by Proposition \ref{propo3}) is that there there are structural boundary loops $W_{1}$, $W_{2}$, $W_{3}$, $W_{4}$, $W_{5}$ and $W_{6}$ of $\widehat{R}_{j}$, each $W_{1}$ and $W_{2}$ (respectively, $W_{3}$ and $W_{4}$, $W_{5}$ and $W_{6}$) invariant under $U_{j}$ (respectively, $V_{j}$, $U_{j}V_{j}$), and there are loxodromic transformation $A_{j}, B_{j}, C_{j} \in K$, with $A_{j}$ (respectively, $B_{j}$, $C_{j}$) commuting with $U_{j}$ (respectively, $V_{j}$, $U_{j}V_{j}$), $A_{j}(W_{1})=W_{2}$, $B_{j}(W_{3})=W_{4}$ and $C_{j}(W_{5})=W_{6}$. In this case, by Klein-Maskit's combination theorem, 
$K_{j}^{*}=\langle K_{j}, A_{j}, B_{j}, C_{j} \rangle=((\langle K_{j} \rangle *_{\langle A_{j} \rangle}) *_{\langle B_{j} \rangle}) *_{\langle C_{j} \rangle} \cong (({\mathbb Z}^{2}_{2}*_{\mathbb Z})*_{\mathbb Z})*_{\mathbb Z}$ (a basic virtual Schottky group of type (T7)).

\subsubsection{The structural description of $K$}
Let $W \in \widetilde{\mathcal F}$ be the common boundary structure loop of the two regions $\widehat{R}_{i}$ and $\widehat{R}_{j}$.

If $K(W)=\{I\}$, then Klein-Maskit's combination theorem asserts that $\langle K^{*}_{i},K^{*}_{j}\rangle= K^{*}_{i} * K^{*}_{j}$.
If $K(W)=K^{*}_{i} \cap K^{*}_{j} \cong {\mathbb Z}_{2}$, then $\langle K^{*}_{i},K^{*}_{j}\rangle= K^{*}_{i} *_{K(W)} K^{*}_{j}$.

By doing this process at all pair of regions with common boundary structure loop, we obtain a subgroup $K^{*}<K$ which is a free product of groups as described in the theorem (the only type not used so far is (T2)). 

Next, if $W_{1} \in \widetilde{\mathcal F}$ is a boundary structure loop of $\widetilde{R}$, which has not been already considered, then (by the maximilaity choice of the regions $\widehat{R}_{j}$ and that they are non-$K$-equivalent) there exists some element $A \in K$ and a boundary structural loop $W_{2} \in \widetilde{\mathcal F}$ (not necessarily different from $W_{1}$) such that $A(W_{1})=W_{2}$. (If $W_{1} \neq W_{2}$, then $A$ is loxodromic and, if $W_{1}=W_{2}$, then $A$ has order two). The group $\langle K^{*}, A\rangle$ is (again by Klein-Maskit's combination theorem) the free product $K^{*} *\langle A \rangle$ (when $A$ is loxodromic, we are getting groups of type (T2)). 
We proceed with all the structural boundary loops in the similar way to obtain a subgroup $\widehat{K}<K$ which is a free product, in the sense of Klein-Maskit's combination theorem) ofbasic virtual Schottky groups.  As $\overline{\widetilde{R}}$ projects onto all $S/H=\Omega/K$, we observe that $\widehat{K}=K$ and we are done.



\begin{thebibliography}{99}

\bibitem{Bers}
L. Bers. 
Automorphic forms for Schottky groups. 
{\it Adv. in Math.} {\bf 16} (1975), 332--361.


\bibitem{A-S}
L. V. Ahlfors and and L. Sario.  
{\it Riemann Surfaces}. 
Princeton Univ. Press. Princeton, New Jersey (1960).


\bibitem{Chuckrow}
V. Chuckrow. 
On Schottky groups with application to Kleinian
groups. {\it Ann. of Math.} {\bf 88} (1968), 47--61.

\bibitem{H:mefp}
R. A. Hidalgo.
The Mixed Elliptically Fixed Point property for Kleinian groups. 
{\it Ann. Acad. Fenn.} {\bf 19} (1994), 247--258. 



\bibitem{Hidalgo:Schottky}
R. A. Hidalgo. 
On Schottky groups with automorphisms. 
{\it Ann. Acad. Sci. Fenn. Series A.I. Math.} {\bf  19} (1994), 259--289.


\bibitem{Hidalgo:Cyclic}
R. A. Hidalgo.
Cyclic extensions of Schottky uniformizations.
{\it Ann. Acad. Scie. Fenn. Mathematica} {\bf 29} (2004), 329--344.



\bibitem{Hidalgo:cota}
R. A. Hidalgo.
On the 12(g-1) Bound. 
{\it C.R. Math. Rep. Acad. Sci. Canada} {\bf 18} (1996), 39-42.

\bibitem{H:extendedfunction}
R. A. Hidalgo.
Extended function groups. 
https://arxiv.org/abs/1812.06048

\bibitem{H-M}
R. A. Hidalgo and B. Maskit. 
A Note on the Lifting of Automorphisms. 
In Geometry of Riemann Surfaces. {\it Lecture Notes of the London Mathematics Society} {\bf 368} (2009), 260--267. 
Edited by Fred Gehring, Gabino Gonzalez and Christos Kourouniotis. ISBN-13: 9780521733076


\bibitem{Hurwitz}
A. Hurwitz.
\"Uber algebraische Gebilde mit eindeutigen Transformationen in sich.
{\it Math. Ann.} {\bf 41} (1893), 403--442. 


\bibitem{Koebe1}
P. Koebe. 
\"Uber die Uniformisierung der Algebraischen Kurven II. 
{\it Math. Ann.} {\bf 69} (1910),1--81.

\bibitem{Koebe2}
P. Koebe.
\"Uber die Uniformisierung reeller algebraischer Kurven.
{\it Nachr. Akad. Wiss. G\"ottingen} (1907), 177--190.

\bibitem{Koebe3}
P. Koebe.
\"Uber die Uniformisierung beliebiger analytischer Kurven.
{\it Nachr. Ges. Wiss. G\"ottingen} (1907),191--210.


\bibitem{Kra}
I. Kra. Deformations of Fuchsian groups, II. 
{\it Duke Math. J.} {\bf 38} (1971), 499--508.

\bibitem{Maskit1}
B. Maskit.
A theorem on planar covering surfaces with applications to $3$-manifolds. 
{\it Ann. of Math.} (2) {\bf 81} (1965), 341--355.

\bibitem{Maskit2}
B. Maskit. 
A characterization of Schottky groups. 
{\it J. d'Analyse Math.} {\bf 19} (1967), 227--230.



\bibitem{Maskit:construction}
B. Maskit.
Construction of Kleinian groups.
{\it Proc. of the conf. on Complex Anal.}, Minneapolis, 1964, Springer-Verlag, 1965.

\bibitem{Maskit:decomposition}
B. Maskit.
Decomposition of certain Kleinian groups.
{\it Acta Math.} {\bf 130} (1973), 243--263.


\bibitem{Maskit:function1}
B. Maskit. 
On the classification of Kleinian Groups I--Koebe groups. 
{\it Acta Mathematica} {\bf 135} (1975), 249--270.

\bibitem{Maskit:function}
B. Maskit. 
On the classification of Kleinian Groups II--Signatures. 
{\it Acta Mathematica} {\bf 138} (1976), 17--42.


\bibitem{Maskit:Comb}
B. Maskit.
On Klein's combination theorem III.
{\it  Ann. of Math. Studies} {\bf 66} (1971), 
Princeton Univ. Press, 297--316.

\bibitem{Maskit:Comb4}
B. Maskit. 
On KleinÕs combination theorem. IV. 
{\it Trans. Amer. Math. Soc.} {\bf 336} (1993), 265--294.


\bibitem{Maskit:book}
B. Maskit. 
{\it Kleinian Groups}. 
GMW, Springer-Verlag, 1987.

\bibitem{MT}
K. Matsuzaki and M. Taniguchi.
{\it Hyperbolic Manifolds and Kleinian Groups}.
Oxford Mathematical Monographs. Oxford Science Publications. The Clarendon Press, Oxford University Press, New York, 1998.

\bibitem{Poincare}
H. Poincar\'e.
Sur l'uniformisation des fonctions analytiques.
{\it Acta Math.} {\bf 31} (1907), 1--64. 

\bibitem{Schwarz}
H. A. Schwarz. 
Ueber diejenigen algebraischen Gleichungen zwischen zwei ver\"anderlichen Gr\"ossen, welche eine Schaar rationaler eindeutig umkehrbarer Transformationen in sich selbst zulassen. (German) {\it J. Reine Angew. Math.} {\bf 87} (1879), 139--145. 


\bibitem{Selberg}
A. Selberg. 
On discontinuous groups in higher dimensional symmetric spaces. 
In Contribution to {\it Function Theory}, pp. 147--164, Tata, 1960. 

\bibitem{Wilkie}
H. C. Wilkie. 
On non-Euclidean crystallographic groups.
{\it  Mathematische Zeitschrift} {\bf 91} (1966), 87--102.


\bibitem{Y-M1}
W. Meeks and S-T. Yau.
The equivariant Dehn's lemma and loop theorem.
{\it Comment. Math. Helvetici} {\bf 56} (1981), 225--239. 



\bibitem{Y-M2}
W. Meeks and S-T. Yau.
The equivariant loop theorem for three-dimensional manifolds and a review of the existence theorems for minimal surfaces. 
The Smith conjecture (New York, 1979), 153--163, 
{\it Pure Appl. Math.}, {\bf 112}, Academic Press, Orlando, FL, 1984.



\bibitem{Zimmermann2}
B. Zimmermann. 
 \"Uber Abbildungsklassen von Henkelk\"orpern. 
 {\it Arch. Math.} {\bf 33} (1979), 379-382. 
 
\bibitem{Zimmermann3}
B. Zimmermann. 
\"Uber Hom\"oomorphismen $n$-dimensionaler Henkelk\"orper und endliche Erweiterungen von Schottky-Gruppen. (German) [On homeomorphisms of $n$-dimensional handlebodies and on finite extensions of Schottky groups] 
{\it Comment. Math. Helv.} {\bf 56} (1981), no. 3, 474-486. 
 



\end{thebibliography}
\end{document}